\newif\ifPDF
\newtheorem{theorem}{Theorem}[section]
\newtheorem{lemma}[theorem]{Lemma}
\newtheorem{remark}[theorem]{Remark}
\newcommand{\eps}{\varepsilon}
\newcommand{\bbC}{\mathbb C} 
 \newcommand{\bbN}{\mathbb N}
\newcommand{\bbR}{\mathbb R} \newcommand{\bbS}{\mathbb S}
 \newcommand{\bn}{\mathbf n}
 \newcommand{\bv}{\mathbf v} 
 \newcommand{\bx}{\mathbf x} 
\newcommand{\by}{\mathbf y}
\newcommand{\cC}{\mathcal C} \newcommand{\cD}{\mathcal D}
\newcommand{\cI}{\mathcal I} \newcommand{\cJ}{\mathcal J}
\newcommand{\cK}{\mathcal K} \newcommand{\cL}{\mathcal L}
\newcommand{\cO}{\mathcal O}  
\newcommand{\cQ}{\mathcal Q} 
\newcommand{\cU}{\mathcal U}
\newenvironment{keywords}
{\noindent{\bf Key words. }\small}{\par\vspace{1ex}}
\newenvironment{AMS}
{\noindent{\bf AMS subject classifications 2000.}\small}{\par}
\newcommand{\chapterauthor}[1]{%
	{\parindent0pt\vspace*{-25pt}%
		\linespread{1.1}\large\scshape#1%
		\par\nobreak\vspace*{35pt}}
	\@afterheading%
}
\title{Separability of the kernel function in an integral formulation for anisotropic radiative transfer equation}
\author{
	Kui Ren\thanks{
        Department of Applied Physics and Applied Mathematics, Columbia University, New York, NY 10027; kr2002@columbia.edu}
    \and Hongkai Zhao\thanks{
        Department of Mathematics, University of California, Irvine, CA 92697; zhao@uci.edu}
    \and     Yimin Zhong\thanks{
        Department of Mathematics, University of California, Irvine, CA 92697; yiminz@uci.edu}    }
\begin{document}
\maketitle

\begin{abstract}
We study in this work an integral formulation for the radiative transfer equation (RTE) in anisotropic media with truncated approximation to the scattering phase function. The integral formulation consists of a coupled system of integral equations for the angular moments of the transport solution. We analyze the approximate separability of the kernel functions in these integral formulations, deriving asymptotic lower and upper bounds on the number of terms needed in a separable approximation of the kernel functions as the moment grows. Our analysis provides the mathematical understanding on when low-rank approximations to the discretized integral kernels can be used to develop fast numerical algorithms for the corresponding system of integral equations.
\end{abstract}
\begin{keywords}
	Radiative transfer equation, anisotropic scattering, integral formulation, approximate separability, low-rank approximation, fast algorithms
\end{keywords}
\begin{AMS}
	 	45B05, 85A25, 15A18, 33C55 
\end{AMS}

\section{Introduction}
\label{SEC:INTRO}

The radiative transfer equation (RTE) is an important mathematical model for the quantitative description of particle transport processes in many physical and biological systems~\cite{CeBaBeAi-TTSP99,Chandrasekhar-Book60,HeGr-AJ41,Larsen-NSE88,LeMi-Book93,Mokhtar-Book97,SpKuCh-JQSRT01,TuFrDuKl-JCP04}. In recent years, research interests in the RTE has been fueled with its newly-dicovered application in emerging areas such as optical imaging~\cite{Arridge-IP99,Bal-IP09,DeThUr-JCP12,DiRe-JCP14,GaZh-TTSP09,GaZh-OE10,GoYa-SIAM16,KiMo-IP06,LaLiUh-SIAM18,LiSu-arXiv19,MaRe-CMS14,Ren-CiCP10,ReBaHi-SIAM06,SaTaCoAr-IP13,Tamasan-IP02,Wang-AIHP99,ZhZh-SIAM18} and imaging in random media~\cite{BaCaLiRe-IP07,BaRe-SIAM08,BoGa-PRE16}.

In the steady-state, RTE is usually formulated as the following integro-differential equation:
\begin{equation}\label{eq:rte1}
\begin{aligned}
\bv \cdot \nabla u(\bx, \bv) + \sigma_t(\bx) u(\bx, \bv) &= \sigma_s(\bx)\int_{\bbS^{d-1}} p(\bv, \bv') u(\bx, \bv') d\bv' + q(\bx, \bv),&\quad& \text{ in }D
\\
u(\bx, \bv) &= f(\bx,\bv), &\quad &\text{ on }\Gamma_{-}
\end{aligned}
\end{equation}
where $u(\bx, \bv)$ is density of the radiative particles at location $\bx\in\Omega$ traveling in the direction $\bv\in\bbS^{d-1}$. The physical space $\Omega$ is assumed to be a bounded convex set in $\bbR^d$ and the angular space, that is the space of all possible traveling directions, $\bbS^{d-1}$, is the unit sphere in $\bbR^{d}$. The phase space is defined as $D =\Omega \times \bbS^{d-1}$ with incoming and outgoing boundaries, $\Gamma_{-}$ and $\Gamma_{+}$ respectively, given as $\Gamma_{\pm}=\{(\bx,\bv)\in \partial\Omega\times \bbS^{d-1}\,|\, \pm\bv\cdot \bn(\bx) > 0\}$, $\bn(\bx)$ being the outward normal vector at $\bx \in \partial\Omega$. The functions $q$ and $f$ denote respectively the internal and boundary sources of particles in the problem.

The coefficients $\sigma_t(\bx)$ and $\sigma_s(\bx)$ are the transport and scattering coefficients respectively. For the well-posedness of solution, we assume that there exist positive constants $k_0$, $\sigma_0$, and $\sigma_1$ such that
\begin{equation}\nonumber
\begin{aligned}
&\sup_{\bx\in\Omega}\frac{\sigma_s(\bx)}{\sigma_t(\bx)} = k_0 < 1 ,\quad \sigma_0 \le \sigma_s(\bx) < \sigma_t(\bx) \le \sigma_1 < \infty.
\end{aligned}
\end{equation}

The scattering phase function $p(\bv, \bv')$ represents the probability of particles with propagation direction $\bv$ being scattered into direction $\bv'$. A very common choice for $p(\bv, \bv')$ in application is the Henyey-Greenstein function which depends on $\bv$ and $\bv'$ only through their inner product $\bv\cdot\bv':=\cos\theta$:
\begin{equation}\label{EQ:HG}
p(\bv, \bv')=p(\bv\cdot\bv')=p_{HG}(\cos\theta): = \begin{dcases}
\frac{1-g^2}{(1 + g^2 - 2g \cos\theta)}\quad &d = 2, \\
\frac{1-g^2}{(1+g^2 - 2g \cos\theta)^{3/2}}\quad &d = 3,
\end{dcases}
\end{equation}
where $g\in(-1,1)$ is the anisotropy parameter. For the simplicity of the presentation, we have normalized the surface measure $d\bv'$ over $\bbS^{d-1}$ so that the scattering term of Equation~\eqref{eq:rte1} does not carry the factor $1/|\bbS^{d-1}|$ in front of the integral. For a scattering phase function that can be parameterized as in~\eqref{EQ:HG}, the normalization of the surface measure, $d\bv=d\theta/2\pi$ when $d=2$ and $d\bv=\sin\theta d\theta d\varphi/4\pi$ when $d=3$ ($\varphi$ being the azimuthal angle), leads to the usual normalization conditions on the scattering phase function: $\int_{\bbS^{d-1}} p(\bv, \bv') d\bv =\int_{\bbS^{d-1}} p(\bv, \bv') d\bv' =1$, required for the scattering process to have mass conservation.

The major challenge of solving the RTE~\eqref{eq:rte1} is due to its high dimensionality: the equation is posed in the phase space $D = \Omega\times \bbS^{d-1}$ which has dimension $2d-1$ when the physical space $\Omega$ is in $\bbR^d$ ($d\ge 2$). Therefore, dimension reduction, or model reduction in general, is often preferred in the study of RTE. A classical dimension reduction method is the diffusion approximation. This is the case when the mean free path of the particles is very small (assuming that the size of the domain $\Omega$ is of order $1$). In this case, one can show that the solution $u(\bx, \bv)$ of the RTE becomes independent of the directional variable $\bv$ when the mean free path goes to zero, and converges to the solution to the classical diffusion equation. Therefore in the diffusion limit, the dimension of the problem reduces to the dimension of the physical space. In~\cite{ReZhZh-arXiv19}, a different dimension reduction method is introduced for the RTE in the case of isotropic scattering, that is the case of $p(\bv, \bv')\equiv 1$. In this case, one can derive an integral equation for the average of $u$ over the direction variable $\bv$, that is, the quantity $\cU(\bx)= \int_{\bbS^{d-1}} u(\bx, \bv) d\bv$. More precisely, assuming that the source functions $q(\bx, \bv) = q(\bx)$ and $f(\bx, \bv)=0$ for simplicity, then $\cU(\bx)$ satisfies the following Fredholm integral equation:
\begin{equation}\label{EQ:RTE Isot Integ}
	\cU(\bx) = \frac{1}{\nu_d}\int_{\Omega} \frac{E(\bx, \by)}{|\bx - \by|^{d-1}}(\sigma_s(\by)\cU(\by) + q(\by))\mathrm{d}\by\,,
\end{equation}
with the function
\begin{equation}
	E(\bx, \by) = \exp\left( - |\bx - \by|\int_{0}^1 \sigma_t(\bx + (\by-\bx)s) ds\right)
\end{equation}
representing the the path integral of $\sigma_t$ on the segment connecting $\bx$ and $\by$. This integral formulation can be used to solve for $\cU$ with existing fast algorithms for integral equations; see~\cite{ReZhZh-arXiv19} for an algorithm based on the fast multipole method (FMM).

The above integral formulation can be generalized to the case of anisotropic scattering when the scattering kernel $p(\bv, \bv')$ is highly separable, in appropriate sense that we will specify later. This was done in~\cite{FaAnYi-JCP19}. Instead of solving one integral equation, the anisotropic case involves a system of coupled integral equations. Unless the kernel $p$ has only a finite number of modes, one has to truncate the coupled system to obtain a finite system of integral equations for the (generalized) modes of $u$; see~\cite{FaAnYi-JCP19} for more details. The key difference between the integral formulation and the classical ${\rm P_N}$ method (i.e. the method of spherical harmonics truncated at order $N-1$)~\cite{DaLi-Book93-6} is that the truncation in the integral formulation is taken in the scattering phase function while the truncation in the ${\rm P_N}$  method is taken on the modes of the RTE solution $u$. When the scattering kernel $p$ is highly separable, the former approach, that is the integral equation approach, leads to small truncated system that would give highly accurate approximation to the original solution $u$, regardless of the scattering strength of the medium, that is the size of $\sigma_s$.

In this work, we consider the general expansion of the scattering phase function of the form $p(\bv, \bv')=p(\bv\cdot\bv')\equiv p(\cos\theta)$, the Henney-Greenstein function~\eqref{EQ:HG} being a special example, as follows
\begin{equation}\label{EQ:PHASE}
\begin{aligned}
{p}(\cos\theta ) = \begin{dcases}
\sum_{n= -\infty}^{\infty}\chi_n \cos n\theta, &d= 2\\
\sum_{n= 0}^{\infty}(2n+1)\chi_n P_n(\cos\theta), &d= 3
\end{dcases}
\end{aligned}
\end{equation}
where $P_n$ is the $n$th Legendre polynomial and $\chi_n$ is a real number for each $n$. Due to symmetry of cosine function, we may assume $\chi_n = \chi_{-n} = \chi_{|n|}$ for $d=2$. The system of integral equations resulted from such expansions, of the form~\eqref{eq:rte1}, has integral kernels that are quite different from existing integral kernels in the literature (most of which are related to the fundamental solutions to the Laplace operator, the Helmholtz operator and alike)~\cite{EnZh-CPAM18, Hackbusch-Book17}. Our objective in this work is to characterize the separability properties of these kernel functions for the general anisotropic scattering phase function $p$ given in~\eqref{EQ:PHASE}. Since the separability property of an integral kernel is directly related to the numerical rank of the discretized integral operators, our objective is essentially to understand whether or not (hierarchical) low-rank approximations to the corresponding integral kernels exists for such anisotropic scattering phase functions. Existence of such (hierarchical) low-rank approximations is crucial in developing fast solvers for the system of integral equations.

The rest of the paper is organized as follows. In Section~\ref{SEC:FORM}, we introduce the integral formulations of the RTE in two- and three-dimensional domains with with the above scattering phase function. We review also some basic properties of the resulted system of integral equations. We then derive lower and upper bounds for the separability of the integral kernels in the integral formulations in Sections~\ref{SEC:SEP}. Concluding remarks are made in Section~\ref{SEC:CON}.

\section{Truncated integral formulation}
\label{SEC:FORM}

We now present the integral formulation of the radiative transfer equation~\eqref{eq:rte1} with anisotropic scattering. We consider the following $M$-term truncated approximation to the phase scattering function~\eqref{EQ:PHASE}:
\begin{equation}\label{EQ:TRUNC}
\begin{aligned}
{p}(\cos\theta ) \simeq p_M(\cos\theta):= \begin{dcases}
\sum_{n = 0}^{M-1} (2-\delta_{0n})\chi_n \cos n\theta\quad &d= 2,\\
\sum_{n = 0}^{M-1} (2n+1)\chi_n P_n(\cos\theta)\quad &d= 3.
\end{dcases}
\end{aligned}
\end{equation}
where $\delta_{0n}$ denotes Kronecker delta. By the orthogonality of the Fourier basis (resp. Legendre polynomials), the above expansions are the best $M$-term approximations in $L^2(\bbS^{d-1})$.

Some of the most famous examples of truncated approximations of the form~\eqref{EQ:TRUNC} are Chandrasekhar's  one-term and two-term models $p_1=1+\chi_1 \cos\theta$ and $p_2=1+\chi_1 \cos\theta+\chi_2 P_2(\cos\theta)$, and the Rayleigh phase function $p_{Rayleigh}=\dfrac{3}{4}(1+\cos^2\theta)$ (which is the special case of $p_2$ with $\chi_1=0$ and $\chi_2=1/2$)~\cite{Chandrasekhar-Book60}.

\begin{remark}
When the scattering phase function $p(\cos\theta)$ is analytic in $\theta$, one concludes from the Paley-Wiener theorem (for example Theorem 13.2.2 in~\cite{Davis-Book75} for $d=3$) that the coefficients of the expansion, $\chi_n$, decay exponentially with $n$.
\end{remark}

\begin{remark}
For the popular Henyey-Greenstein scattering phase function~\eqref{EQ:HG}, we verify easily that $\chi_n = g^{|n|}$,  $-1\le g\le 1$. When the anisotropy factor $g$ is close to $1$, that is, when the scattering is highly forward-peaking, the above expansion converges very slowly. One therefore needs a large $M$ in~\eqref{EQ:TRUNC} to have good approximation accuracy. There are some other approximation schemes, mostly empirical, in the literature to handle such forward-peaking phase functions. For instance, the $\delta-M$ method tries to approximate highly forward-peaking phase functions with the superposition of a Dirac function $\delta(\theta-\theta_0)$ in the forward direction $\theta_0$ and a $M$-term expansion of the smooth part of the phase function~\cite{Wiscombe-JAS77}. The Dirac function term in the phase function can be integrated out in the scattering operator, resulting in only a modification of the transport coefficient $\sigma_t$ in the same radiative transfer equation~\eqref{eq:rte1}. Therefore, the $\delta-M$ approximation can be treated in exactly the same way as the general $M$-term truncation~\eqref{EQ:TRUNC}.
\end{remark}

\subsection{The two-dimensional case}

When $d=2$, we use the parameterization $\bv:= (\cos\theta, \sin\theta)$, normalized measure $d\bv = \frac{d\theta}{2\pi}$ and the notation $u(\bx, \theta):=u(\bx, \bv)$. Taking the $M$-term approximation of the scattering phase function, note that $\cos n\theta = \frac{1}{2}(e^{in\theta} + e^{-in\theta})$, we can write the RTE~\eqref{eq:rte1} as
\begin{equation}\label{eq:RTE3}
\bv \cdot \nabla u(\bx, \theta) +{\sigma}_t(\bx) u(\bx,\theta) = {\sigma}_s(\bx) \sum_{n\in \cJ}\frac{1}{2\pi}\int_{\bbS} \chi_{|n|} e^{in(\theta-\theta')} u(\bx,\theta')d\theta' + q(\bx,\theta)\,,
\end{equation}
where the index set $\cJ = \{n\in\bbN : |n|< M \}$. 
Let us define the $n$th angular Fourier modes of solution $u(\bx,\theta)$ and source $q(\bx, \theta)$ as follows
\begin{equation}\nonumber
u_n(\bx) = \frac{1}{2\pi}\int_{\bbS}  u(\bx,\theta)e^{-in\theta} d\theta,\quad q_n(\bx) = \frac{1}{2\pi}\int_{\bbS} q(\bx, \theta) e^{-in\theta} d\theta\,.
\end{equation}
We then check that if the angular Fourier modes of $q(\bx, \theta)$ is only nonzero when $n\in \cJ$, that is, $q_n=0$ $\forall n\notin \cJ$, then the equation~\eqref{eq:RTE3} could be rewritten as
\begin{equation}\label{EQ:RTE4}
	\bv \cdot \nabla u(\bx, \theta) + {\sigma}_t(\bx) u(\bx,\theta) = {\sigma}_s(\bx) \sum_{n\in\cJ} \chi_{|n|} e^{in \theta} u_n(\bx) + \sum_{n\in\cJ}  e^{in \theta}q_n(\bx)\,.
\end{equation}
Following the same derivation as in~\cite{ReZhZh-arXiv19}, we first integrate the transport part, that is the left side, of the equation by the method of characteristics to obtain that
\begin{equation}\label{EQ:U}
	u(\bx, \theta) = \int_0^{\tau_{-}(\bx, \theta)} \exp\left(-\int_0^l {\sigma}_t(\bx - s\bv) ds\right) \phi(\bx - l\bv, \theta)dl\,,
\end{equation}
where $\tau_{-}(\bx, \theta) = \sup\{s\,|\,\bx -s\bv \in \Omega\}$ is travel distance inside the physical domain from point $\bx$ along direction $-\bv$ (i.e. $\theta+\pi$) to reach the domain boundary, and $\phi(\bx, \theta)$ denotes the right-hand-side of~\eqref{EQ:RTE4}: 
\begin{equation}
	\phi(\bx, \theta) = {\sigma}_s(\bx) \sum_{n\in\cJ} \chi_{|n|} e^{in \theta} u_n(\bx) + \sum_{n\in\cJ}  e^{in \theta}q_n(\bx)\,.
\end{equation}
We then compute the $k$th moment of $u(\bx, \theta)$ by multiplying~\eqref{EQ:U} with $\frac{1}{2\pi} e^{-ik\theta}$ and integrate over $[0, 2\pi]$. The result, after a change of variable from Cartesian to polar coordinate, is the following system of coupled integral equations for the angular Fourier modes of $u(\bx, \bv)$, $\{u_k(\bx)\}_{k\in\cJ}$:
\begin{equation}\label{EQ:Integ Form k}
	u_k(\bx) = \frac{1}{2\pi}\int_{D} \frac{E(\bx , \by)}{|\bx - \by|}\left({\sigma}_s(\by)\sum_{n\in\cJ}\chi_{|n|} e^{-i(k-n)\theta} u_n(\by) +\sum_{n\in\cJ}  e^{-i(k-n)\theta}q_n(\by) \right)d\by\,,\ k\in\cJ
\end{equation}
where $\theta = \arg(\bx - \by)$ and $E(\bx,\by) = \exp\left(-\int_0^{|\bx-\by|} {\sigma}_t\left(\bx - s\bv\right) ds\right)$ is the total attenuation from $\bx$ to $\by$.  

The system of integral equations in~\eqref{EQ:Integ Form k} can be written in a more compact form. To do that, let us define the integral operator $\mathcal{K}_n: L^2(\Omega)\to L^2(\Omega)$:
\begin{equation}\label{EQ:K2}
\begin{aligned}
&\mathcal{K}_n f := \frac{1}{2\pi}\int_{\Omega} G_n(\bx, \by)   f(\by) d\by,\;\text{where}\;\; G_n(\bx, \by) = \frac{E(\bx,\by)}{|\bx- \by|}e^{-in\theta}
\end{aligned}
\end{equation}
and the vector space $V$:
\begin{equation}\nonumber
V = \{v_{\cJ} := (v_j)_{j\in\cJ}\,|\,\forall j\in\cJ , v_j\in L^2(\Omega;\bbC)\text{ and }\overline{v}_j = v_{-j}  \}
\end{equation}
equipped with induced norm  
$
\|v_{\cJ}\|_{V}^2 = \sum_{j\in\cJ} \|v_j\|^2_{L^2(\Omega)}$, where $\overline{v}_j$ is the complex conjugate of $v_j$.
Then the function vector $\mathcal{U} = (u_{j})_{j\in\cJ}\in V$ and satisfy a system of integral equations
\begin{equation}\label{eq:sys}
(\mathcal{I} - \mathcal{L} \mathcal{D}) \mathcal{U} = \mathcal{L}\mathcal{Q}\,,
\end{equation}
where $\cI$ is the identity operator, $\mathcal{L}$ is an operator Toeplitz matrix with entry $\mathcal{L}_{ij} = \mathcal{K}_{i-j}$ and $\mathcal{D}$ is a multiplicative operator matrix with only diagonal entries $\mathcal{D}_{nn} = \chi_{|n|} {\sigma}_s$, $\forall n\in\cJ$. The source term $\mathcal{Q} = (q_j)_{j\in\cJ}\in V$.

\subsection{The three-dimensional case}

The derivation in the three-dimensional case is similar. First, we observe that by the addition theorem, the Legendre polynomial satisfies the following relation
\begin{equation}
	P_n(\bv\cdot \bv') =  \frac{4\pi}{2n+1} \sum_{m=-n}^n Y_{nm}(\bv) Y^{\ast}_{nm}(\bv') \text{ with }\bv,\bv'\in\bbS^2\,,
\end{equation}
where $Y_{nm}$ is the $m$th spherical harmonics of degree $n$. We then define the spherical harmonic moment of $u$ and $q$ in a similar way as in the two-dimensional case:
\begin{equation}\nonumber
u_{nm}(\bx) =\int_{\bbS^2}  u(\bx,\bv) Y^{\ast}_{nm}(\bv) d\bv,\quad q_{nm}(\bx) = \int_{\bbS^2} q(\bx, \bv) Y_{nm}^{\ast}(\bv)d\bv\,,
\end{equation}
where we emphasize that the surface measure $d\bv$ is normalized.

Under the same assumption that the source function $q$ has only nonzero moment within the index set $\cJ'= \{(n,m)\in\bbN^2: 0\le n < M, |m|\le n\}$, we can write the RTE again as
\begin{equation}
	\bv\cdot \nabla u(\bx, \bv) + \sigma_t(\bx) u(\bx, \bv) = \sigma_s(\bx)\sum_{(n,m )\in \cJ'} \chi_n Y_{nm}(\bv) u_{nm}(\bx) + \sum_{(n,m )\in \cJ'} Y_{nm}(\bv) q_{nm}(\bx)\,.
\end{equation}
Using the same technique as in the two-dimensional case, we can show that the spherical harmonic moments of $u$ satisfy the following integral equation system, $\forall (k,l) \in \cJ'$,
\begin{equation}
u_{kl} (\bx) = \frac{1}{4\pi} \int_{\Omega} \frac{E(\bx, \by)}{|\bx - \by|^2} \left( \sum_{(n,m )\in \cJ'} Y_{nm}\left( \frac{\bx - \by}{|\bx - \by|}\right) Y_{kl}^{\ast}\left( \frac{\bx - \by}{|\bx - \by|}\right) \left(\chi_n  \sigma_s(\by) u_{nm}(\by) + q_{nm}(\by)\right) \right) d\by\,.
\end{equation}
We can introduce a similar vector space $V$, and formulate this integral equation system about $\cU = (u_{kl})_{(k,l)\in\cJ'}$ again into the form
\begin{equation}
	(\cI - \cL\cD) \cU = \cL \cQ
\end{equation}
with the operator matrix $\cL$ having entries
\begin{equation}
\cL_{nm, kl} f (\bx):= \frac{1}{4\pi}\int_{\Omega} \frac{E(\bx, \by)}{|\bx - \by|^2}Y_{nm}\left( \frac{\bx - \by}{|\bx - \by|}\right) Y_{kl}^{\ast}\left( \frac{\bx - \by}{|\bx - \by|}\right) f(\by) d\by,
\end{equation}
the multiplicative operator matrix $\cD$ having only diagonal entries $\cD_{nm, nm} = \chi_n \sigma_s$, and the source vector $\cQ = (q_{nm})_{(n,m)\in\cJ'}$.

\subsection{Elementary properties}

In general, it is \emph{not} guaranteed that the truncated approximations~\eqref{EQ:TRUNC} of the scattering phase function are always non-negative~\cite{Wiscombe-JAS77}. If indeed we have $p_M(\cos\theta) \ge 0$, then we can easily adapt results from standard transport theory, for instance those in~\cite{Agoshkov-Book12}, to show the uniqueness of the solution to the truncated radiative transfer equation~\eqref{eq:RTE3}, which automatically implies the uniqueness of the solution to the integral equation system~\eqref{eq:sys}. We summarize the result in the following theorem whose proof we omit here.
\begin{theorem}
	If the truncated phase function $p_M(\cos\theta)\ge 0$ for all $\theta\in [0,2\pi]$, then the system of integral equations~\eqref{eq:sys} is uniquely solvable in $V$. In this case, the operator $\cL \cD: V\to V$ is a contraction, that is
	\begin{equation}
	\|\cL \cD\|_{op} \le \sup_{\bx\in \Omega} \frac{{\sigma}_{s}(\bx)}{{\sigma}_t(\bx)} \le k_0 < 1\,.
	\end{equation}
	where $\|\cdot\|_{op}$ denotes the operator norm on $V$.
\end{theorem}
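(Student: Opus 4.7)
The plan is three-step: identify the integral system with a projected version of the truncated RTE, invoke standard transport theory for uniqueness under the non-negativity hypothesis, and extract the contraction bound from a pointwise kernel estimate combined with Parseval in the angular variable.

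First I would set up the equivalence. Given $\cU=(u_n)_{n\in\cJ}\in V$, introduce the finite-mode angular profile
\[
\psi(\bx,\theta):=\sum_{n\in\cJ} e^{in\theta}u_n(\bx),\qquad \cS_M\psi(\bx,\theta):=\sum_{n\in\cJ}\chi_{|n|}e^{in\theta}u_n(\bx),
\]
and observe via the orthogonality of the Fourier basis that $\cS_M$ is precisely convolution against $p_M(\cos(\theta-\theta'))$ in the angular variable (for fixed $\bx$). Retracing the derivation of~\eqref{EQ:Integ Form k}, the map $\cU\mapsto\cL\cD\cU$ is then seen to be the Fourier projection onto modes in $\cJ$ of $\omega:=T^{-1}[\sigma_s\cS_M\psi]$, where $T^{-1}$ denotes the solution operator of $\bv\cdot\nabla w+\sigma_t w=\text{source}$ with zero inflow data. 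Hence solving \eqref{eq:sys} is equivalent to solving \eqref{EQ:RTE4} whose right-hand side only carries modes in $\cJ$.

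Second, when $p_M\ge 0$, the normalization $\chi_0=1$ makes $p_M$ a genuine angular probability kernel. This places \eqref{EQ:RTE4} squarely within the classical framework of Agoshkov's monograph, yielding a unique $L^2$ solution $u$; projecting onto the $\cJ$-modes recovers $\cU$ and thus gives unique solvability of~\eqref{eq:sys} in $V$.

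For the contraction I would estimate $\|\cL\cD\cU\|_V$ directly. Parseval in $\theta$ gives $\|\cL\cD\cU\|_V^2\le\|\omega\|_{L^2(D)}^2$. The fundamental pointwise bound is
\[
\int_0^{\tau_{-}(\bx,\bv)}\exp\Bigl(-\int_0^l\sigma_t(\bx-s\bv)\,ds\Bigr)\,\sigma_s(\bx-l\bv)\,dl\;\le\;k_0\Bigl[1-\exp\Bigl(-\int_0^{\tau_{-}}\sigma_t\Bigr)\Bigr]\le k_0,
\]
obtained by replacing $\sigma_s\le k_0\sigma_t$ and integrating exactly. Combining this with Cauchy--Schwarz on the characteristic line yields $|\omega(\bx,\bv)|^2\le k_0\int_0^{\tau_{-}}e^{-\int\sigma_t}\sigma_s(\bx-l\bv)|\cS_M\psi|^2\,dl$. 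Changing variables $\by=\bx-l\bv$ and using that $p_M\ge 0$ forces $|\chi_k|\le\chi_0=1$, so $\|\cS_M\psi(\bx,\cdot)\|_{L^2_\theta}\le\|\psi(\bx,\cdot)\|_{L^2_\theta}$, delivers the contraction.

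The hard part will be extracting the clean constant $k_0$ rather than $k_0\sqrt{\sigma_1/\sigma_0}$ in the final $L^2$ estimate: a one-shot Cauchy--Schwarz introduces a weight mismatch between $\sigma_s$ and $\sigma_t$. The fix is to apply the pointwise $k_0$-bound \emph{twice} (once to pull $\sigma_s$ into the kernel of $T^{-1}$, once in the change-of-variables integral over $l$ for fixed $\by$), which effectively performs the estimate in the weighted norm $\|f\|_{\sigma_t}^2=\int\sigma_t|f|^2\,d\bx d\bv$ in which the scaling is automatically compatible. After this symmetric double application the factors $\sigma_1/\sigma_0$ cancel and $\|\cL\cD\|_{op}\le k_0$ emerges, proving the Banach--fixed-point solvability and the contraction claim simultaneously.
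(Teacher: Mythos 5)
First, a point of reference: the paper does not actually prove this theorem. The authors state that it follows by adapting standard transport theory (citing Agoshkov) and explicitly omit the proof, so there is no in-paper argument to compare yours against. With that caveat, most of your outline is sound and is exactly the kind of adaptation the authors presumably have in mind: the equivalence between \eqref{eq:sys} and the truncated equation \eqref{EQ:RTE4}, the observation that $p_M\ge 0$ together with $\chi_0=1$ forces $|\chi_n|\le \chi_0=1$ (which is precisely where non-negativity enters the contraction estimate), Bessel's inequality in $\theta$, the pointwise bound $\int_0^{\tau_-}e^{-\int_0^l\sigma_t}\sigma_s\,dl\le k_0\bigl[1-e^{-\int_0^{\tau_-}\sigma_t}\bigr]$, and the Cauchy--Schwarz along characteristics are all correct.

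The genuine gap is in your last step. The ``symmetric double application'' does exactly what you say it does: it proves $\|\cL\cD\|\le k_0$ in the $\sigma_t$-weighted norm $\int\sigma_t|f|^2$. It does not follow that the factors $\sigma_1/\sigma_0$ ``cancel'' in the norm the theorem actually uses: $\|\cdot\|_{op}$ is induced by $\|v_{\cJ}\|_V^2=\sum_j\|v_j\|_{L^2(\Omega)}^2$, and passing from the weighted bound back to this unweighted norm reintroduces precisely the factor $\sqrt{\sigma_1/\sigma_0}$ you are trying to eliminate. Concretely, after the change of variables $\by=\bx-l\bv$ the leftover line integral $\int_0^{\tau_+(\by,\bv)}\exp\bigl(-\int_0^l\sigma_t(\by+s\bv)\,ds\bigr)\,dl$ is controlled by the reciprocal of $\inf\sigma_t$ along the ray, not by $1/\sigma_t(\by)$, so the unweighted estimate stalls at $k_0\sqrt{\sigma_1/\sigma_0}$, which can exceed $1$ for media with large contrast in $\sigma_t$; a Schur-test computation on the kernel $E(\bx,\by)\,|\bx-\by|^{1-d}\sigma_s(\by)$ indicates the unweighted operator norm genuinely need not be $\le k_0$. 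What your argument does deliver is that the spectral radius of $\cL\cD$ is at most $k_0$ (equivalently, that it is a $k_0$-contraction in an equivalent weighted norm, or that its symmetrization $\sqrt{\sigma_s}\,\cK\,\sqrt{\sigma_s}$ has norm $\le k_0$ by a Schur test with weight $\sqrt{\sigma_s}$), and this suffices for the unique solvability claim via the Neumann series. But the displayed inequality $\|\cL\cD\|_{op}\le k_0$ in the norm the paper puts on $V$ is not established by your proof; to close it you would need either to prove the bound directly in the unweighted norm (which I do not believe is possible for general variable $\sigma_t$) or to restate the contraction in the weighted norm.
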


The above theorem says that when the truncated scattering phase function $p_M$ is non-negative, the linear operator $(\mathcal{I}-\mathcal{L}\mathcal{D})$ is invertible in $V$, and the solution $\mathcal{U}\in V$ to the integral equation can be found as
\begin{equation}
\label{eq:inte}
\mathcal{U} = (\mathcal{I}-\mathcal{L}\mathcal{D})^{-1} \mathcal{L}\mathcal{Q}\,,
\end{equation}
through either direct or iterative methods. In the two-dimensional case, although the integral operator matrix $\cL$ has $\cO(M^2)$ entries in total, there are only $\cO(M)$ distinct integral operators $\cK_{n}$ due to the Toeplitz structure of the matrix. In the three-dimensional case, we will have to deal with $\cO(M^4)$ different integral operators $\cL_{nm,kl}$ for $(n,m), (k,l)\in\cJ'$ if no further simplification can be made. Direct application of the operator $\cL$ would be very expensive if we could not find good ways to compress the operators. Let us look at this in more detail. Let us first use the contraction rule of spherical harmonics to have the following equality using the $3j$-symbols~\cite{Wigner-Book59}:
\begin{equation}\label{EQ:3J}
Y_{nm}(\bv) Y_{kl}^{\ast}(\bv) = \sqrt{\frac{(2n+1)(2k+1)}{4\pi}} \sum_{r,s}(-1)^{s+l} \sqrt{2r+1} 
\begin{pmatrix}
n & k & r \\ m & -l & -s
\end{pmatrix} \begin{pmatrix}
n & k & r\\0 & 0 & 0
\end{pmatrix} Y_{rs}(\bv)\,,
\end{equation}
where $r$ and $s$ are integers satisfying the $3j$-symbol selection rules~\cite{Wigner-Book59}:
\begin{equation}
|s|\le r, \quad m - l -s = 0, \quad |n-k|\le r \le n + k.
\end{equation} 
Using this equality, i.e.~\eqref{EQ:3J}, we can express the operator $\cL_{nm, kl}$ as:
\begin{equation}\label{EQ:K3}
	\cL_{nm,kl} f(\bx) = \frac{1}{4\pi}\sum_{r,s} \mu_{mls}^{nkr}\int_{\Omega} G_{rs}(\bx, \by)  f(\by)d\by, \quad G_{rs}(\bx, \by) = \frac{E(\bx, \by)}{|\bx - \by|^2} Y_{rs}\left( \frac{\bx - \by}{|\bx - \by|}\right)
\end{equation}
with the constant
\begin{equation}
\mu_{mls}^{nkr} := \sqrt{\frac{(2n+1)(2k+1)(2r+1)}{4\pi}} (-1)^{s+l} 
\begin{pmatrix}
n & k & r \\ m & -l & -s
\end{pmatrix} \begin{pmatrix}
n & k & r\\0 & 0 & 0
\end{pmatrix}\,.
\end{equation}
It is clear that applying each $\cL_{nm,kl}$ on average will involve $\cO(M)$ integrals. Therefore the total complexity to apply $\cL$ is $\cO(M^5)$, while there are only $\cO(M^2)$ distinct integral kernels $G_{rs}$ in total. Numerically, compressing integral operators usually takes much more time than applying the operators. It is therefore more favorable to have less compression when $M$ is kept small.

The number of integral equations and integral kernels in the system~\eqref{eq:inte} depends on the number of terms $M$ in the truncation~\eqref{EQ:TRUNC}. $M$ depends on the accuracy requirement and the smoothness of the scattering kernel in $\theta$. When the scattering kernel is highly anisotropic, large $M$ is needed to get a good approximation since $\chi_n$ decays slowly. In such a case, the integral kernels $G_n(\bx,\by)$ in~\eqref{EQ:K2} and $G_{rs}(\bx,\by)$ in ~\eqref{EQ:K3} become more oscillatory as $n$ and $r$ increases, resulting in larger computational cost in the evaluation of integral operators with such kernels. 

Evaluation of the application of an integral operator in the discretized case is equivalent to the evaluation of a matrix-vector multiplication with a dense matrix. The technique to reduce the computational complexity of such matrix-vector multiplication is to construct (hierarchical) low-rank approximations to the dense matrix involved. When such low-rank approximations are available, it is often the case that one can reduce the complexity of a matrix-vector multiplication to something that is comparable to that of a vector-vector multiplication. Examples of algorithms based on such low-rank approximations are fast multipole methods~\cite{GrRo-JCP87,Greengard-Thesis88} and butterfly algorithms~\cite{MiBo-IEEE96,CaDeYi-MMS09,LuQiBu-JCP14}.
 
In the rest of the paper, we will analyze the separability properties of the kernel functions, for instance $G_n(\bx,\by)$ and $G_{rs}(\bx,\by)$, in the integral equation formulation of the truncated anisotropic radiative transfer equation. Our analysis will provide a mathematical understanding on low-rank approximations of the matrices corresponding to the discretization of these continuous integral kernels.

%
%
\section{Separability of the kernel functions}
\label{SEC:SEP}

We are interested in the approximate separability property of the integral kernels defined in~\eqref{EQ:K2} and ~\eqref{EQ:K3}. For a given function of two variables, $G(\bx, \by)$, we characterize its approximate separability as follows. Take two disjoint sets $X, Y\subset \Omega \subset \bbR^d$ and a tolerance $\eps > 0$, we would like to characterize the smallest number $N^{\eps}$ for which there exists $\Big(f_l(\bx), g_l(\by)\Big), l =1,2,\dots, N^{\eps}$ such that
\begin{equation}\label{EQ:SEP}
\left\|G(\bx, \by) - \sum_{l=1}^{N^{\eps}} f_l(\bx) g_l(\by)\right\|_{L^2(X\times Y)} \le \eps \|G\|_{L^2(X\times Y)}.
\end{equation}
We choose $L^2$ norm in the function space because the approximate separability definition is directly related to the best rank-$r$ approximation of a matrix, a discretized version of $G(\bx,\by)$, which can be computed by the singular value decomposition (SVD). The reason we require the two sets $X$ and $Y$ be disjoint is because the integral kernels have singularities at $\bx=\by$ which make the kernel not square integrable in its domain of definition. This means that the full matrix corresponding to an integral kernel in the computation domain, i.e., $G(\bx,\by), (\bx,\by)\in \Omega \times \Omega$, does not have a low-rank approximation. However, if the kernel is highly separable, i.e., $N^{\eps}$ grows at most (poly-) logarithmically in $\eps$ as $\eps\rightarrow 0$,  for well separated $X$ and $Y$, with proper ordering, i.e., grouping indices into well separated admissible sets, the matrix allows a hierarchical structure for which the off-diagonal sub-matrices have low-rank approximations. For example, it was shown that the Green's functions corresponding to coercive elliptic differential operators in divergence form are highly separable~\cite{BeHa-NM03}, i.e., $N^{\eps}=O(|\log \eps|^{d+1})$, when $X, Y$ are well separated. This property implies that the inverse matrix of the linear system $Ax=b$ resulting from a discretization of the differential equation has a hierarchical low-rank structure. This property has been exploited in developing fast director solvers for coercive elliptic partial differential equations~\cite{Hackbusch-Computing99,Bebendorf-MC05, Borm-NM10, HoYi-CPAM16} using the fact that each column of $A^{-1}$ is a discrete version of the underlying Green's function. On the other hand, it was shown that the Green's function for high frequency Helmholtz equation is not highly separable due to the highly oscillatory phase in the function~\cite{EnZh-CPAM18} and hence hierarchical low-rank approximations do not exist for the inverse matrix of the discretized linear system when the wavenumber is large. The concept has been generalized to study the separability of the covariance function for random fields~\cite{BrZhZh-MMS19} which relates the number of terms needed for the Karhumen--Lo\'{e}ve expansion of a random field to a given accuracy requirement.

\begin{remark}
If we view the function $G(\bx,\by)$ as a family of functions of $\bx$ in $L^2(X)$ parametrized by $\by\in Y$ (or vice versa), then the concept of approximate separability can be directly related to the concept of Kolmogorov $n$-width\footnote{The Kolmogorov $n$-width of a set $S$ in a normed space $W$ is its worst-case distance to the best $n$-dimensional linear subspace $L_n$:
\[
	d_n(S, W) :=\inf_{L_n}\sup_{f\in S}\inf_{g\in L_n} \|f-g\|_W\,.
\]
} 
for this family of functions~\cite{Kolmogorov-AM36}. Any linear subspace in the function space that approximates this family of functions within relative tolerance $\eps$ has a dimension of at least $N^{\eps}$, and the space spanned by $f_l(\bx), l=1, 2, \ldots, N^{\eps}$ in~\eqref{EQ:SEP} is an optimal one. The Kolmogorov $n$-width of a set reveals its intrinsic complexity. 
\end{remark}

In the next two subsections, we show the approximate separability property of the integral kernels~\eqref{EQ:K2} (in 2D) ~\eqref{EQ:K3} (in 3D) in the following two scenarios:
\begin{itemize}
\item For a fixed $\eps$, we derive the lower bound for $N^{\eps}$ to show how it grows as $n$ (resp. $r$) increases in 2D (resp. 3D) due to the increasing oscillations in the kernel.
\item For a fixed $n$ (resp. $r$ in 3D), we derive the upper bound for $N^{\eps}$ to show how it grows as $\eps \rightarrow 0$.
\end{itemize}
The main procedure for the derivation is the same in 2D and 3D. However, as we will see, the calculations in 3D are much more complicated.

\subsection{Separability's lower bounds}
\label{SEC:LOWER}

\subsubsection{The two-dimensional case}

Let us first estimate the separability for the integral kernel $G_n$ defined in~\eqref{EQ:K2} for large $n$ in 2D.  Analogous to the phenomenon shown in~\cite{EnZh-CPAM18}, for a fixed tolerance $\eps$, the number of terms in the separable approximation of $G_n$ has to grow as some power of $n$ due the fast oscillation of the kernel. For simplicity, we assume that the coefficients ${\sigma}_{t}$ and ${\sigma}_s$ are smooth over the physical domain $\Omega$. 

Following the idea in~\cite{EnZh-CPAM18}, we first characterize the correlation between two integral kernels $G_n(\cdot, \by_1)$ and $G_n(\cdot, \by_2)$ with $\by_1, \by_2\in Y$
\begin{equation}
\cC (\by_1, \by_2) = \frac{1}{\|G_n(\cdot, \by_1)\|_2 \|G_n(\cdot, \by_2)\|_2} \int_X G_n(\bx, \by_1) \overline{G_n(\bx, \by_2)} d\bx\,.
\end{equation}
\begin{lemma}\label{THM:LOW} 
	Let $X, Y$ be two disjoint compact domains in $\bbR^2$ and $\text{\rm dist}(X,Y) \ge \gamma\, \text{\rm diam}(Y)$ for some $\gamma = O(1)$. Then there exists $\frac{3}{2}\ge  \alpha \ge 1$ such that
	\begin{equation}
	\left|\cC(\by_1, \by_2)\right| \le O\left( (n |\by_1 -\by_2|)^{-\alpha} \right),  \quad \mbox{as }\ \ n |\by_1 -\by_2|\to\infty\,.
	\end{equation}
\end{lemma}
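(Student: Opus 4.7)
The plan is to recast $\cC(\by_1, \by_2)$ as a normalized oscillatory integral whose effective frequency is $\lambda := n|\bh|$, $\bh := \by_1 - \by_2$, and then to deploy the standard non-stationary/stationary phase machinery. Because $X$ and $Y$ are disjoint, the denominator $\|G_n(\cdot, \by)\|_2^2 = \int_X E(\bx,\by)^2 |\bx - \by|^{-2}\, d\bx$ is independent of $n$ and bounded uniformly from above and below for $\by \in Y$. It thus suffices to establish
\[
|I(\by_1, \by_2)| := \left| \int_X A(\bx)\, e^{-in\Phi(\bx)}\, d\bx \right| = O\!\bigl((n|\bh|)^{-\alpha}\bigr),
\]
with smooth amplitude $A(\bx) = \frac{E(\bx,\by_1) E(\bx,\by_2)}{|\bx - \by_1||\bx - \by_2|}$ and phase $\Phi(\bx) = \theta_1(\bx) - \theta_2(\bx)$, where $\theta_i(\bx) = \arg(\bx - \by_i)$.

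The crux of the argument is a uniform lower bound on $|\nabla_\bx \Phi|$ over $X$. A direct calculation gives $\nabla_\bx \theta_i = \hat{\bv}_i^\perp/r_i$ with $\hat{\bv}_i = (\bx-\by_i)/r_i$, so $\nabla_\bx \Phi = \hat{\bv}_1^\perp/r_1 - \hat{\bv}_2^\perp/r_2$. Expanding about $\by_2$ and decomposing $\bh$ along $\hat\bv := \hat\bv_2$ and $\hat\bv^\perp$ yields the leading-order identity
\[
\nabla_\bx \Phi(\bx) = \frac{1}{r^2}\bigl[(\bh\cdot\hat\bv)\,\hat\bv^\perp + (\bh\cdot\hat\bv^\perp)\,\hat\bv\bigr] + O(|\bh|^2/r^3),
\]
whose leading vector has norm exactly $|\bh|/r^2$. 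The separation hypothesis $\mathrm{dist}(X,Y) \ge \gamma\, \mathrm{diam}(Y)$ forces $r = |\bx - \by|$ to be bounded above and below by constants depending only on $\gamma$ and the geometry of $X \cup Y$, and simultaneously forces $|\bh|/r \le 1/\gamma$, small enough to keep the remainder subdominant. This furnishes $|\nabla_\bx \Phi(\bx)| \ge c|\bh|$ uniformly in $\bx \in X$ and $\by_1, \by_2 \in Y$, and in particular rules out interior stationary points of $\Phi$.

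Equipped with non-stationarity, I apply integration by parts via the first-order operator $L = -\frac{1}{in|\nabla\Phi|^2}\,\nabla\Phi \cdot \nabla$, which satisfies $L(e^{-in\Phi}) = e^{-in\Phi}$. Each iteration provides an extra factor $(n|\nabla\Phi|)^{-1} = O((n|\bh|)^{-1})$ in the interior term, together with a boundary term
\[
-\frac{1}{in}\int_{\partial X}\frac{A\,\nabla\Phi \cdot \bn}{|\nabla\Phi|^2}\,e^{-in\Phi}\, dS,
\]
which is $(n|\bh|)^{-1}$ times a 1D oscillatory integral on $\partial X$. Estimating the latter trivially already gives $\alpha = 1$; if $\Phi|_{\partial X}$ admits only non-degenerate critical points relative to the geometry of $\partial X$, classical one-dimensional stationary phase contributes an extra factor $(n|\bh|)^{-1/2}$, improving the exponent to $\alpha = 3/2$. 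In every case the exponent falls in $[1,3/2]$, and dividing by the $n$-independent lower bound on $\|G_n(\cdot, \by_i)\|_2$ yields the asserted bound on $\cC$.

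The main obstacle is the uniform lower bound $|\nabla_\bx \Phi| \gtrsim |\bh|$: a naive mean-value argument only supplies a matching upper bound, so one must track the precise leading-order vector structure above and use the separation hypothesis to exclude any cancellation between $\hat\bv_1^\perp/r_1$ and $\hat\bv_2^\perp/r_2$. Once this uniform non-stationarity is in hand, the remaining oscillatory-integral estimates are routine, provided the constants are tracked uniformly over $\by_1, \by_2 \in Y$.
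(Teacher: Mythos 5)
Your proposal follows essentially the same route as the paper's proof: bound the $n$-independent normalization $\|G_n(\cdot,\by)\|_2$ above and below, reduce $\cC$ to an oscillatory integral with effective frequency $n|\by_1-\by_2|$, verify the absence of interior stationary points, integrate by parts so that the dominant contribution is the boundary term on $\partial X$, and apply one-dimensional stationary phase there to obtain $\alpha\in[1,3/2]$ (with $\alpha=1$ in the degenerate case where $\partial X$ follows a level set of the phase). The only remark worth adding is that your self-identified ``main obstacle'' dissolves: since $\nabla_\bx\theta_i=(\bx-\by_i)^{\perp}/|\bx-\by_i|^2$, the elementary identity $\bigl|\tfrac{\ba}{|\ba|^2}-\tfrac{\bb}{|\bb|^2}\bigr|=\tfrac{|\ba-\bb|}{|\ba|\,|\bb|}$ gives $|\nabla_\bx\Phi|=|\by_1-\by_2|/(r_1r_2)$ \emph{exactly}, so the uniform lower bound $|\nabla\Phi|\gtrsim|\by_1-\by_2|$ holds with no perturbative expansion and no smallness requirement on $|\by_1-\by_2|/r$.
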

\begin{proof}
	First, $\|G_n(\cdot, \by) \|_2$ is a smooth function of $\by$ since the fast oscillation phase is not present and there exist positive constants $C_1$ and $C_2$, independent of $n$, such that $C_1 < \|G_n(\cdot, \by)\|_2 < C_2$. For the integral part, let us introduce
	\begin{equation}
	\begin{aligned}
	\tilde{n} &= n|\by_1 - \by_2|\,,\\
	\phi(\bx) &= \frac{\arg(\bx - \by_2) - \arg(\bx - \by_1)}{|\by_1 - \by_2|}\,,\\
	u(\bx) &= \frac{E(\bx, \by_1) E(\bx, \by_2)}{\|G_n(\cdot, \by_1)\|_2 \|G_n(\cdot, \by_2)\|_2|\bx-\by_1| |\bx - \by_2|}\,.
	\end{aligned}
	\end{equation} 
	The sign of $\phi$ depends on the locations of $\bx$. The correlation can then be written as the following oscillatory integral,
	\begin{equation}
	\left|\cC(\by_1, \by_2)\right| = \left| \int_X e^{i\tilde{n} \phi(\bx)} u(\bx) d\bx  \right|\,.
	\end{equation}
	We also have $|\phi(\bx)|\le O(1/\text{\rm dist}(X,Y))$ and $|\nabla \phi(\bx)|\neq 0$ unless $\bx$ sits inside the segment between $\by_1, \by_2$. Therefore, there is no stationary point in our setting and $|\nabla \phi(\bx)| \ge O(\gamma^{-2}), \forall x\in X$. $u(\bx)$ is smooth since $E(\bx, \by)$ is smooth. We define the differential operator $L$:
	\begin{equation}
	L = \frac{1}{|\nabla \phi|^2} \nabla \phi \cdot \nabla,\quad L^{\ast} = -\nabla \cdot \frac{1}{|\nabla \phi|^2} \nabla \phi\,.
	\end{equation}
	Using integration by part, we  have
	\begin{equation}
	\begin{aligned}
	&\int_X e^{i\tilde{n} \phi(\bx)} u(\bx) d\bx = \frac{1}{i \tilde{n}} \int_X (L e^{i \tilde{n}\phi(\bx)}) u(\bx) d\bx \\
	&= \frac{1}{i\tilde{n}}\left[ \int_{X} e^{i\tilde{n}\phi(\bx)} (L^{\ast} u(\bx)) d\bx + \int_{\partial X} |\nabla \phi(\bx)|^{-2} \bn(\bx)\cdot \nabla \phi(\bx) e^{i\tilde{n}\phi(\bx)} u(\bx) dS(\bx) \right] \\
	&=-\frac{1}{\tilde{n}^2}\left[ \int_X e^{i\tilde{n}\phi(\bx)} ((L^{\ast})^2 u(\bx) )d\bx + \int_{\partial X} |\nabla \phi(\bx)|^{-2} \bn(\bx)\cdot \nabla\phi(\bx)  e^{i \tilde{n}\phi(\bx)}L^{\ast} u(\bx) dS(\bx)\right] \\
	&\quad + \frac{1}{i\tilde{n}}\int_{\partial X} |\nabla \phi(\bx)|^{-2} \bn(\bx)\cdot \nabla \phi(\bx) e^{i\tilde{n}\phi(\bx)} u(\bx) dS(\bx)\,.
	\end{aligned} 
	\end{equation}
	The last term has leading order and is an oscillatory integral along the boundary $\partial X$. If $\phi(\bx)$ has isolated non-degenerate critical points on $\partial X$, which is a one dimensional curve, then the boundary integral is of order $O(\tilde{n}^{-\frac{1}{2}})$ by stationary phase theory. So we have 
	\begin{equation}
	\left| \int_X e^{i\tilde{n} \phi(\bx)} u(\bx) d\bx\right| \le  O(\tilde{n}^{-3/2}),\quad  \tilde{n}\to\infty\,.
	\end{equation}
	In the special case that $\phi(\bx)$ has non-isolated critical points, which means that $\partial X$ and some level set of $\phi$ have coincidental part, the boundary integral will be at order of $O(1)$ and 
	\begin{equation}
	\left| \int_X e^{i\tilde{n} \phi(\bx)} u(\bx) d\bx\right| \le  O(\tilde{n}^{-1}),\quad \tilde{n}\to \infty\,.
	\end{equation}
	This completes the proof.
\end{proof}
Figure~\ref{fig:1} shows numerical evidence of the scaling of the (normalized) correlation function $\cC(\by_1, \by_2)$ with respect to $\tilde n$ at two different locations. In both cases, we observed the expected decay behavior as predicted by the theory, i.e $O(\tilde{n}^{-3/2})$ for general case and $O(\tilde{n}^{-1})$ if part of $\partial X$ coincides with the level set of $\phi$.
\begin{figure}[!t]
	\centering
	\begin{tabular}{cc}
		\includegraphics[scale=0.38]{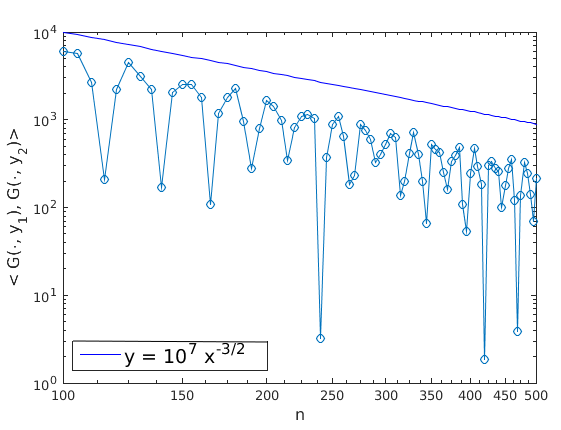}
		&
		\includegraphics[scale=0.38]{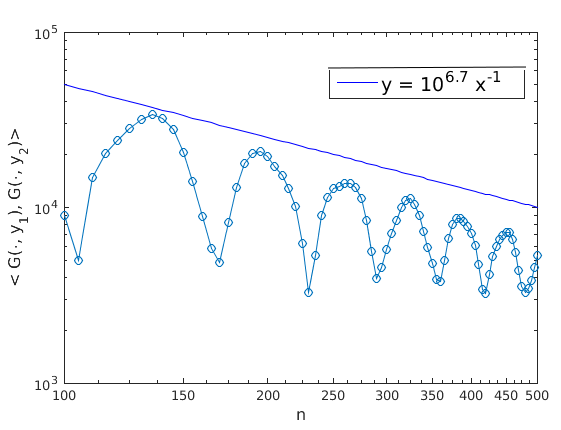}
	\end{tabular}
	\caption{The normalized correlation of two kernel functions at two different locations $\by_1$ and $\by_2$. $X$ is unit square $[0,1]\times[0,1]$. Left: $\by_1 = (2,0), \by_2=(2, \frac{1}{2})$. In this case, $\phi$ has isolated non-degenerate critical points on $\partial X$. Right: $\by_1 = (2,0), \by_2 = (\frac{3}{2}, 0)$.  In this case, all points on the bottom side of $\partial X$ are critical points of $\phi$. }
	\label{fig:1}
\end{figure}

Using the above correlation estimate with $\alpha \ge \frac{d}{2}$ where $d=2$, by Lemma 3.1 and Theorem 3.1 of~\cite{EnZh-CPAM18}, we directly conclude the following lower bound for the separability of the integral kernel $G_n$.
\begin{theorem}\label{thm:sep}
	Let $X, Y$ be two disjoint compact domains in $\bbR^2$ and $\text{\rm dist}(X,Y) \ge \gamma\, \text{\rm diam}(Y)$ for some $\gamma = O(1)$, then for any $\eps > 0$, if there are functions $f_l(\bx)\in L^p(X)$,  $g_l(\by) \in L^p(Y)$, $p=2$ or $p=\infty$, $l=1,\dots, N^{\eps}$ such that
	\begin{equation}\label{EQ:SEP2}
	\left\|G_n(\bx, \by) - \sum_{l=1}^{N^{\eps}} f_l(\bx) g_l(\by)\right\|_{L^p(X\times Y)} \le \eps \|G_n\|_{L^p(X\times Y)},\quad \bx\in X, \by\in Y\,,
	\end{equation}
	then $N^{\eps} \ge O (n^{2-\delta})$ for any small $\delta > 0$ when $n$ is sufficiently large. 
\end{theorem}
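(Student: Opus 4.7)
The plan is to reduce the statement, essentially verbatim, to the abstract Kolmogorov $n$-width lower bound established as Lemma~3.1 and Theorem~3.1 of~\cite{EnZh-CPAM18}. In that framework, a quantitative decay estimate on the pairwise correlations of a parametrized family of unit vectors in a Hilbert space translates directly into a lower bound on the dimension of any subspace that uniformly approximates the family. Lemma~\ref{THM:LOW} supplies precisely this input.

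First I would take the ambient Hilbert space to be $L^2(X)$ and the parametrized family to be the normalized kernel slices
\[
    e_\by(\bx) \,:=\, \frac{G_n(\bx,\by)}{\|G_n(\cdot,\by)\|_{L^2(X)}}, \qquad \by \in Y ,
\]
so that $\langle e_{\by_1}, e_{\by_2}\rangle_{L^2(X)} = \cC(\by_1,\by_2)$. Two ingredients are needed to feed into the abstract machinery: the normalizing factor $\|G_n(\cdot,\by)\|_{L^2(X)}$ is bounded above and below by positive constants independent of $n$ and of $\by\in Y$ (shown in the opening lines of the proof of Lemma~\ref{THM:LOW}), and the correlation satisfies $|\cC(\by_1,\by_2)| \lesssim (n|\by_1-\by_2|)^{-\alpha}$ with $\alpha\ge 1 = d/2$ in the case $d=2$ (which is the conclusion of Lemma~\ref{THM:LOW} itself).

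These are precisely the hypotheses under which the abstract results of~\cite{EnZh-CPAM18} force any rank-$N^\eps$ separable approximation of relative $L^2$-error at most $\eps$ to satisfy $N^\eps \ge C_\eps \, n^{d-\delta}$ for every $\delta>0$, once $n$ is sufficiently large. The arbitrarily small slack $\delta$ emerges from optimizing the auxiliary grid spacing in $Y$ against the correlation decay rate in the abstract argument (choosing a maximal $h$-separated grid in $Y$, controlling the Gram matrix of $\{e_{\by_i}\}$ by a shell-counting sum of the correlation estimate, and balancing $h$ against $n$). Substituting $d=2$ yields the claimed bound for $p=2$. The $p=\infty$ case of~\eqref{EQ:SEP2} reduces to the $p=2$ case because $|G_n(\bx,\by)| = E(\bx,\by)/|\bx-\by|$ is independent of $n$ (the $e^{-in\theta}$ factor has unit modulus), so $\|G_n\|_{L^2(X\times Y)}$ and $\|G_n\|_{L^\infty(X\times Y)}$ admit $n$-uniform two-sided bounds, and the two relative error criteria are equivalent up to a fixed rescaling of $\eps$.

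The main obstacle I anticipate is purely a bookkeeping check: one must verify that the abstract statements of~\cite{EnZh-CPAM18}, originally formulated for the high-frequency Helmholtz Green's function, depend solely on the two quantitative hypotheses isolated above and not on any further structural feature of Helmholtz wave propagation. Our kernel $G_n$ differs from the Helmholtz Green's function in several ways (its oscillation is angular rather than radial, and its nonoscillatory envelope has a different decay rate), but these differences are already absorbed into the derivation of Lemma~\ref{THM:LOW}; once the correlation estimate and the norm bounds are in place, the abstract lower-bound argument goes through unchanged.
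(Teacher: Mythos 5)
Your proposal is correct and follows essentially the same route as the paper: the paper's proof of this theorem is a one-line reduction to Lemma~3.1 and Theorem~3.1 of~\cite{EnZh-CPAM18}, fed by the correlation decay $|\cC(\by_1,\by_2)|\le O((n|\by_1-\by_2|)^{-\alpha})$ with $\alpha\ge d/2=1$ from Lemma~\ref{THM:LOW} and the $n$-uniform two-sided bounds on $\|G_n(\cdot,\by)\|_2$. Your additional remarks on the Gram-matrix/shell-counting mechanism and the equivalence of the $p=2$ and $p=\infty$ criteria (via $|e^{-in\theta}|=1$) are consistent with, and merely make explicit, what the cited abstract results supply.
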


\begin{figure}[!htb]
    \centering
    \begin{tabular}{cc}
        \includegraphics[scale=0.6]{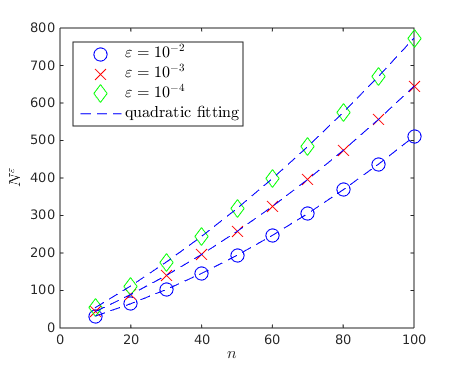}&
        \includegraphics[scale=0.6]{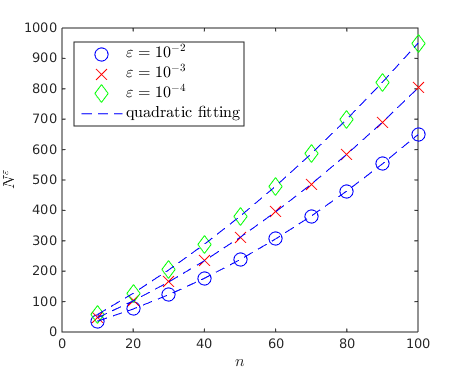}
    \end{tabular}
    \caption{The growth of the number of  leading singular values of the matrix $\big(G_n(\bx_i, \by_j)\big)$ above different threshold $\eps$ with respect to $n$ in the two-dimensional case. Left: $\bx_i\in [0\ 1]^2$ and $\by_j\in [1.25\ 2.25]\times[0.5\ 1.5]$; Right: $\bx_i\in [0\ 1]^2$ and $\by_j\in [1.25\ 2.25]\times[0\ 1]$.}
    \label{fig:2}
\end{figure}
The growth of $N^\eps$ with respect to $n$ in the above theorem is a direct manifest of the growth of the number of leading singular values of the corresponding integral operator above a certain threshold. In Figure~\ref{fig:2}, we show the number of the leading singular values of the matrix $(a_{ij}):=\big(G_n(\bx_i, \by_j)\big)$ above different threshold $\eps$ with respect to $n$. In the plots, we take $\bx_i$ and $\by_j$ from uniformly distributed grid points in $X$ and $Y$ respectively, with the grid size resolves the length scale of $n^{-1}$.  The left plot shows the result for two unit squares $X$ and $Y$ at centers of $(0.5, 0.5)$ and $(1.75, 1.0)$ respectively while the the right plot shows the result for two unit squares at centers $(0.5, 0.5)$ and $(1.75, 0.5)$ respectively. In both plots, we observe quadratic growth of the number of leading singular values above a certain threshold with respect to $n$.


\subsubsection{The three-dimensional case}

\begin{figure}[!htb]
    \centering 
    \includegraphics[scale=0.17]{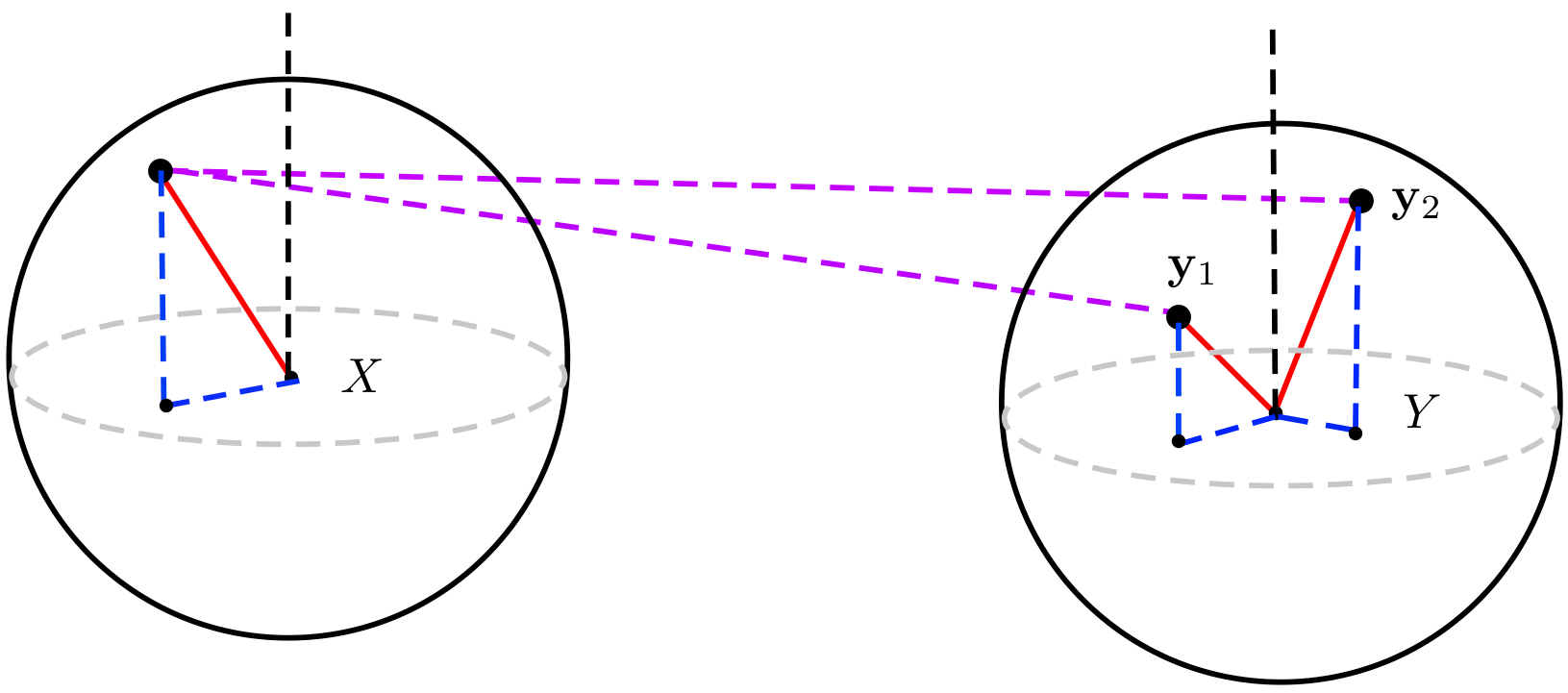}
    \caption{The sets $X$ and $Y$ are compact manifolds. The correlation function estimates the inner product in~\eqref{EQ:COR} as $n\to\infty$.}
    \label{FIG:COR}
\end{figure}
Let $X$ and $Y$ be two disjoint convex compact domains in $\bbR^3$. In the same manner as in the 2D case, to study approximate separability of the kernel $G_{nm}$ in~\eqref{EQ:K3}, we first characterize the correlation of two integral kernels $G_{nm}(\cdot, \by_1)$ and $G_{nm}(\cdot,\by_2)$ for $\by_1, \by_2\in Y$; see Figure~\ref{FIG:COR} for an illustration of the setup:
\begin{equation}\label{EQ:COR}
\cC (\by_1, \by_2) = \frac{1}{\|G_{nm}(\cdot, \by_1)\|_2 \|G_{nm}(\cdot, \by_2)\|_2} \int_X G_{nm}(\bx, \by_1) \overline{G_{nm}(\bx, \by_2)} d\bx\,.
\end{equation}
The separability of $G_{nm}$ depends on the behavior of the spherical harmonics $Y_{nm}$. Here we present the case when $m=0$.
In this case, the spherical harmonics actually is closely related to the Legendre polynomial of degree $n$:
\begin{equation}\label{EQ:YN0}
Y_{n0}(\theta, \phi) = \sqrt{\frac{2n+1}{4\pi}} P_n(\cos\theta)\,.
\end{equation}
We first show some basic properties of $Y_{n0}$.
\begin{lemma}\label{LEM:YN0}
	Let $f\in L^{\infty}(\bbS^2)$, then 
	\begin{equation}
	\begin{aligned}
	\lim_{n\to\infty} \int_0^{2\pi} \int_0^{\pi} |Y_{n0}(\theta, \phi)|^2 f(\theta, \phi) \sin\theta d\theta d\phi = \frac{1}{2\pi^2} \int_0^{\pi} \int_{0}^{2\pi} f(\theta, \phi) d\phi d\theta\,.
	\end{aligned}
	\end{equation}
\end{lemma}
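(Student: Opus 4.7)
The plan is to reduce the statement to a one-dimensional asymptotic fact about the Legendre polynomial $P_n$. Using~\eqref{EQ:YN0}, namely $|Y_{n0}(\theta,\phi)|^2 = \frac{2n+1}{4\pi} P_n(\cos\theta)^2$, and integrating out the $\phi$-variable by Fubini, I would set $F(\theta):=\int_0^{2\pi} f(\theta,\phi)\,d\phi\in L^\infty([0,\pi])$ and reduce the claim to
\[
\frac{2n+1}{2}\int_0^\pi P_n(\cos\theta)^2 F(\theta)\sin\theta\,d\theta \;\longrightarrow\; \frac{1}{\pi}\int_0^\pi F(\theta)\,d\theta, \qquad n\to\infty.
\]
The orthogonality relation $\int_{-1}^1 P_n(t)^2\,dt = 2/(2n+1)$ shows that $\mu_n := \frac{2n+1}{2}P_n(\cos\theta)^2\sin\theta\,d\theta$ is a probability measure on $[0,\pi]$ for every $n$, so the task is to establish weak-$\ast$ convergence $\mu_n \rightharpoonup \frac{1}{\pi}\,d\theta$ when tested against bounded measurable functions.

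The principal tool is the classical Laplace-Heine asymptotic, valid uniformly on every compact subinterval of $(0,\pi)$:
\[
P_n(\cos\theta) = \sqrt{\frac{2}{\pi n\sin\theta}}\,\cos\!\bigl((n+\tfrac12)\theta - \tfrac{\pi}{4}\bigr) + O(n^{-3/2}).
\]
Squaring and applying $2\cos^2(x) = 1 + \cos(2x)$ gives, on each $[\delta,\pi-\delta]\subset(0,\pi)$, the expansion
\[
\frac{2n+1}{2}\,P_n(\cos\theta)^2\sin\theta \;=\; \frac{1}{\pi} + \frac{1}{\pi}\sin\!\bigl((2n+1)\theta\bigr) + O(n^{-1}),
\]
so that the constant term produces exactly the desired limit $\frac{1}{\pi}\int_\delta^{\pi-\delta} F\,d\theta$, the oscillatory piece vanishes by the Riemann-Lebesgue lemma applied to $F\in L^\infty([0,\pi])\subset L^1([0,\pi])$, and the $O(n^{-1})$ error is negligible after integration against bounded $F$.

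The main obstacle lies in the boundary layer near $\theta\in\{0,\pi\}$, where the Laplace-Heine expansion degenerates due to the $(\sin\theta)^{-1/2}$ factor and the trivial bound $|P_n|\le 1$ is too crude, giving only $\mu_n([0,\delta])=O(n\delta^2)$. To handle this I would invoke the sharper uniform estimate $|P_n(\cos\theta)|\le \sqrt{2/(\pi(n+\tfrac12)\sin\theta)}$ on $(0,\pi)$, a standard consequence of the Mehler-Heine formula $P_n(\cos(z/n))\to J_0(z)$; inserted into $\mu_n$, this yields the uniform-in-$n$ bound $\mu_n([0,\delta])\le 2\delta/\pi$ and, symmetrically, $\mu_n([\pi-\delta,\pi])\le 2\delta/\pi$. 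Consequently the endpoint contributions to $\int F\,d\mu_n$ are of order $\delta\|F\|_\infty$ uniformly in $n$. Sending $n\to\infty$ first on the bulk interval $[\delta,\pi-\delta]$ and then $\delta\to 0^+$ pins down the limit, and unfolding the $\phi$-integration via Fubini completes the proof.
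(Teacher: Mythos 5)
Your proof is correct and follows essentially the same route as the paper: reduce to the one-dimensional integral of $P_n(\cos\theta)^2$ against the $\phi$-average of $f$, apply the Darboux/Laplace--Heine asymptotic on the bulk of $(0,\pi)$, extract the constant $1/\pi$ via $2\cos^2 x = 1+\cos 2x$, and kill the oscillatory remainder with the Riemann--Lebesgue lemma. The only real difference is in the boundary layer: you keep a fixed cutoff $\delta$ and use Bernstein's inequality $|P_n(\cos\theta)|\sqrt{\sin\theta}\le\sqrt{2/(\pi(n+\tfrac12))}$ (that, rather than Mehler--Heine, is the correct name for the bound you invoke) followed by a $\delta\to0$ limit, whereas the paper shrinks the cutoff to $c/n$ so that the trivial bound $|P_n|\le 1$ already yields an $O(n^{-1})$ contribution; both treatments are valid.
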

\begin{proof}
From~\eqref{EQ:YN0}, we find
\begin{equation}\label{EQ:LEG}
\int_0^{2\pi} \int_0^{\pi} |Y_{n0}(\theta, \phi)|^2 f(\theta, \phi) \sin\theta d\theta d\phi  = \frac{2n+1}{2}\int_0^{\pi}  |P_n(\cos\theta)|^2 \tilde{f}(\theta)\sin\theta d\theta \,,
\end{equation}
where $\tilde{f}(\theta) = \frac{1}{2\pi}\int_0^{2\pi} f(\theta, \phi) d\phi$. Then by the classical Darboux formula, see for instance Theorem 8.21.13 of~\cite{Szeg-Book39}, for a fixed absolute constant $c > 0$, when $n\to\infty$ and $\theta \in (\frac{c}{n}, \pi - \frac{c}{n})$, we have
\begin{equation}
P_n(\cos\theta)  = \sqrt{\frac{2}{n\pi\sin\theta}} \left[ \cos \left((n + \frac{1}{2})\theta - \frac{\pi}{4} \right) + \frac{\cO(1)}{n \sin\theta}\right]\,.
\end{equation}
Therefore, for  $\theta \in (\frac{c}{n}, \pi - \frac{c}{n})$,  we have
\begin{equation}
|P_n(\cos\theta)|^2 \sin\theta = \frac{2}{n\pi} \cos^2\left((n + \frac{1}{2})\theta - \frac{\pi}{4} \right) + \frac{\cO(1)}{n^2\sin\theta} + \frac{\cO(1)}{n^3\sin^2\theta}\,.
\end{equation}
We decompose the integral~\eqref{EQ:LEG} into three parts:
\begin{equation}
\frac{2n+1}{2}\int_0^{\pi}  |P_n(\cos\theta)|^2 \tilde{f}(\theta)\sin\theta d\theta = \frac{2n+1}{2} \left[ \int_{c/n}^{\pi-c/n} + \int_{\pi-c/n}^{\pi}+\int_0^{c/n} \right]  |P_n(\cos\theta)|^2 \tilde{f}(\theta)\sin\theta d\theta\,.
\end{equation}
Using the fact $|P_n(\cos\theta)|\le 1$, we can estimate the following
\begin{equation}
\begin{aligned}
\Big|\int_{0}^{c/n} |P_n(\cos\theta)|^2 \tilde{f}(\theta) \sin\theta d\theta \Big| &\le \int_{0}^{c/n} |\tilde{f}(\theta)| \sin\theta d\theta \le  \frac{c^2}{n^2}\|f\|_{L^{\infty}(\bbS^2)} = \cO\left(\frac{1}{n^2}\right).
\end{aligned}
\end{equation}
The same estimate applies to the integral over $[\pi - c/n, \pi]$. On the other hand, since $\theta \le \frac{\pi}{2}\sin\theta$ for $\theta \in [0,\frac{\pi}{2}]$, we find that
\begin{equation}
\begin{aligned}
\Big|\int_{\frac{c}{n}}^{\pi-\frac{c}{n}} \frac{1}{n\sin\theta} \tilde{f}(\theta) d\theta  \Big| &\le 2\|f\|_{L^{\infty}(\bbS^2)}\int_{\frac{c}{n}}^{\pi/2} \frac{\pi}{2 n\theta} d\theta =  \cO\left(\frac{\log n}{n}\right)\,,\\
\Big|\int_{\frac{c}{n}}^{\pi-\frac{c}{n}} \frac{1}{n^2\sin^2\theta} \tilde{f}(\theta) d\theta  \Big| &\le 2\|f\|_{L^{\infty}(\bbS^2)}\int_{\frac{c}{n}}^{\pi/2} \frac{\pi^2}{4 n^2\theta^2} d\theta =  \cO\left(\frac{1}{n}\right)\,.
\end{aligned}
\end{equation}
Therefore
\begin{equation}
\begin{aligned}
\frac{2n+1}{2}\int_0^{\pi}  |P_n(\cos\theta)|^2 \tilde{f}(\theta)\sin\theta d\theta &= \frac{2n+1}{n\pi} \int_{\frac{c}{n}}^{\pi-\frac{c}{n}}  |P_n(\cos\theta)|^2 \tilde{f}(\theta)\sin\theta d\theta + \cO\left(\frac{1}{n}\right) \\
&=\frac{2n+1}{n\pi}\int_{\frac{c}{n}}^{\pi-\frac{c}{n}} \left[\frac{1}{2} + \frac{1}{2}  \sin \left((2n + 1)\theta \right)\right]\tilde{f}(\theta) d\theta + \cO\left(\frac{1}{n}\right)\\
&\quad + \int_{\frac{c}{n}}^{\pi-\frac{c}{n}} \frac{\cO(1)}{n\sin\theta} \tilde{f}(\theta) d\theta  + \int_{\frac{c}{n}}^{\pi-\frac{c}{n}} \frac{\cO(1)}{n^2\sin^2\theta} \tilde{f}(\theta) d\theta  \\
&= \frac{2n+1}{n\pi}\int_{\frac{c}{n}}^{\pi-\frac{c}{n}} \left[\frac{1}{2} + \frac{1}{2}  \sin \left((2n + 1)\theta \right)\right]\tilde{f}(\theta) d\theta + \cO\left(\frac{\log n}{n}\right)\\
&= \frac{2n+1}{n\pi}\int_{0}^{\pi} \left[\frac{1}{2} + \frac{1}{2}  \sin \left((2n + 1)\theta \right)\right]\tilde{f}(\theta) d\theta + \cO\left(\frac{\log n}{n}\right)\,.
\end{aligned} 
\end{equation}
Using the Riemann-Lebesgue lemma on the integral of $\tilde{f}(\theta)\sin((2n+1)\theta)$, we obtain
\begin{equation}
\lim_{n\to\infty}  \frac{2n+1}{2}\int_0^{\pi}  |P_n(\cos\theta)|^2 \tilde{f}(\theta)\sin\theta d\theta =\lim_{n\to\infty} 
\frac{2n+1}{2n\pi} \int_0^{\pi} \tilde{f}(\theta) d\theta = \frac{1}{\pi} \int_0^{\pi} \tilde{f}(\theta) d\theta\,.
\end{equation}
The proof is complete.
\end{proof}
\begin{lemma}\label{LEM:LOW}
	Let $X, Y$ be two disjoint compact domains in $\bbR^3$,  we have
	\begin{equation}
	\|G_{n0}(\cdot, \by)\|_2 = \cO(1), \quad n\to\infty\,.
	\end{equation}
\end{lemma}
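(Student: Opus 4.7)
The strategy is to change variables to spherical coordinates centered at $\by$ so that the $L^2$ norm of $G_{n0}(\cdot,\by)$ over $X$ reduces to an integral over $\bbS^2$ of $|Y_{n0}|^2$ against a bounded radial weight, at which point Lemma~\ref{LEM:YN0} immediately yields the bound.

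Substituting $\bx = \by + r\hat r$ with $r = |\bx - \by|$ and $\hat r \in \bbS^2$, we have $d\bx = r^2 dr d\hat r$, and using the explicit form $G_{n0}(\bx,\by) = |\bx-\by|^{-2} E(\bx,\by) Y_{n0}(\hat r)$, the factor $r^{-4}$ from $|G_{n0}|^2$ combines with $r^2$ from the Jacobian, giving
\begin{equation*}
\|G_{n0}(\cdot,\by)\|_2^2 = \int_{\bbS^2} |Y_{n0}(\hat r)|^2 w_\by(\hat r)\, d\hat r, \qquad w_\by(\hat r) := \int_{I_\by(\hat r)} \frac{E(\by + r\hat r, \by)^2}{r^2}\, dr,
\end{equation*}
where $I_\by(\hat r) := \{r > 0 : \by + r\hat r \in X\}$, understood as empty (so $w_\by(\hat r) = 0$) when the ray from $\by$ in direction $\hat r$ misses $X$. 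I would then verify that $w_\by \in L^{\infty}(\bbS^2)$ uniformly in $\by \in Y$: on $I_\by(\hat r)$ one has $r \ge \text{dist}(X,Y) > 0$; the one-dimensional Lebesgue measure of $I_\by(\hat r)$ is bounded by $\text{diam}(X)$ (even without convexity of $X$, since the radial slice has measure at most the diameter); and $0 < E \le 1$. Hence $\|w_\by\|_{L^{\infty}(\bbS^2)} \le \text{diam}(X)/\text{dist}(X,Y)^2$, a bound independent of $n$ and $\by$.

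Rewriting the $\bbS^2$ integral in $(\theta,\phi)$ coordinates and applying Lemma~\ref{LEM:YN0} with $f(\theta,\phi) = w_\by(\hat r(\theta,\phi)) / \sin\theta \cdot \sin\theta$, i.e.\ simply $f = w_\by$ viewed in spherical coordinates, then gives
\begin{equation*}
\lim_{n\to\infty} \|G_{n0}(\cdot,\by)\|_2^2 = \frac{1}{2\pi^2}\int_0^{\pi}\!\int_0^{2\pi} w_\by(\theta,\phi)\, d\phi\, d\theta,
\end{equation*}
a finite quantity, uniformly bounded in $\by$ by the previous $L^{\infty}$ estimate. Combined with the obvious boundedness of $\|G_{n0}(\cdot,\by)\|_2$ for each individual $n$, this yields $\|G_{n0}(\cdot,\by)\|_2 = \cO(1)$ as $n\to\infty$. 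There is no substantial obstacle; the only point requiring care is the uniform $L^\infty$ control of $w_\by$, which is precisely where the disjointness hypothesis $\text{dist}(X,Y) > 0$ enters essentially to prevent blow-up of the radial integrand near $r = 0$.
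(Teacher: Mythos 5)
Your proof is correct and follows essentially the same route as the paper: pass to spherical coordinates centered at $\by$, reduce to an angular integral of $|Y_{n0}|^2$ against a radial weight that is uniformly bounded thanks to $\mathrm{dist}(X,Y)>0$, and invoke Lemma~\ref{LEM:YN0}. The only difference is that the paper's proof additionally records a lower bound $\|G_{n0}(\cdot,\by)\|_2^2 \ge |X|/(2\pi^2\ell^4)$ (needed later when normalizing the correlation function), which your argument omits but which is not required by the statement as written.
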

\begin{proof}
	By definition, we have
	\begin{equation}
	\begin{aligned}
	\int_{X} G_{n0}(\bx, \by)\overline{G_{n0}(\bx, \by)} d\bx =  \int_{X} \left(\frac{E(\bx, \by)}{|\bx - \by|^2}\right)^2 Y_{n0}\left(\frac{\bx - \by}{|\bx - \by|}\right) Y_{n0}^{\ast}\left(\frac{\bx - \by}{|\bx - \by|}\right)  d\bx\,.
	\end{aligned}
	\end{equation}  
	We now do a change of coordinate by shifting $\by$ to the origin and transforming $\bx$ back to spherical coordinate $\bx = (r, \theta, \phi)$. We denote by $V$ the transformed domain of $X$. Using the fact that $E(\bx, \by)$ is bounded from below (since $X$ and $Y$ are compact), we only need to estimate the following
	\begin{equation}
	\begin{aligned}
	\int_{X} \frac{1}{|\bx - \by|^4} Y_{n0}\left(\frac{\bx - \by}{|\bx - \by|}\right) Y_{n0}^{\ast}\left(\frac{\bx - \by}{|\bx - \by|}\right)  d\bx &=  \int_{V} \frac{1}{r^4} |Y_{n0}(\theta, \phi)|^2 r^2 dr \sin \theta d\theta d\phi  \\
	&= \int_{V} \frac{1}{r^2} |Y_{n0}(\theta, \phi)|^2  dr \sin \theta d\theta d\phi \\
	& = \int_0^{2\pi} \int_0^{\pi} \left(\int_{0}^{\infty} \frac{\chi_{V}}{r^2}  dr\right) |  Y_{n0}(\theta, \phi)|^2  \sin \theta d\theta d\phi \,,
	\end{aligned}
	\end{equation}
	where $\chi_V$ is the characteristic function of $V$. The $\cO(1)$ upper bound can be immediately concluded since $X$ and $Y$ are disjoint, which means $\chi_V \equiv 0$ when $r$ is close to zero. When $n\to\infty$,  use the Lemma~\ref{LEM:YN0}, 
	\begin{equation}
	\begin{aligned}
	\lim_{n\to\infty}\int_0^{2\pi} \int_0^{\pi} \left(\int_{0}^{\infty} \frac{\chi_{V}}{r^2}  dr\right) |  Y_{n0}(\theta, \phi)|^2  \sin \theta d\theta d\phi  &= \frac{1}{2\pi^2}\int_{0}^{2\pi} \int_0^{\pi}	\left(\int_{0}^{\infty} \frac{\chi_{V}}{r^2}  dr\right) d\theta d\phi\,.
	\end{aligned}
	\end{equation}
	On the other hand, we know that
	\begin{equation}
	\frac{1}{2\pi^2}\int_{0}^{2\pi} \int_0^{\pi}	\left(\int_{0}^{\infty} \frac{\chi_{V}}{r^2}  dr\right) d\theta d\phi\ge  \frac{1}{2\pi^2}\int_{0}^{2\pi} \int_0^{\pi}	\int_{0}^{\infty} \frac{\chi_{V}}{r^4}  r^2 dr\sin\theta d\theta d\phi \\
	\ge \frac{|X|}{2\pi^2 {\ell}^4}\,,
	\end{equation}
	where ${\ell } = \sup_{\bx\in X, \by\in Y}|\bx - \by|$ and $|X|$ is the volume of $X$.
	Therefore $\|G_{n0}(\cdot, \by)\|_2^2$ is $\cO(1)$, $\forall \by\in Y$ when $n$ is sufficiently large. 
\end{proof}	

\begin{lemma}\label{LEM:3D1}
	Let $X$ and $Y$ be two disjoint convex compact domains in $\mathbb{R}^3$. Suppose $X$ and $Y$ have disjoint projections onto the $xy$-plane and $z$-axis,
    then for the correlation function 
	\begin{equation}
	\cC(\by_1, \by_2)=\frac{1}{\|G_{n0}(\cdot, \by_1)\|_2 \|G_{n0}(\cdot, \by_2)\|_2} \int_X G_{n0}(\bx, \by_1) \overline{G_{n0}(\bx, \by_2)} d\bx\,,
	\end{equation}
	there exists constant $1\le \alpha\le 2$ such that when $n|\by_1 - \by_2|\to \infty$, $\cC(\by_1, \by_2)$ satisfies 
	\begin{equation}\label{EQ: ALPHA}
	|\cC(\by_1, \by_2)| \le  \cO((n|\by_1 - \by_2|)^{-\alpha})\,.
	\end{equation} 
\end{lemma}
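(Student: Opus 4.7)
The strategy is to mirror the proof of Lemma~\ref{THM:LOW} in three dimensions, with two necessary modifications: (i) the closed-form exponential factor $e^{-in\theta}$ of the 2D kernel is replaced by the Legendre polynomial carried by $Y_{n0}(\hat r)=\sqrt{(2n+1)/(4\pi)}\,P_n(\cos\theta)$, which must be expanded through the Darboux asymptotic formula; and (ii) the boundary stationary-phase step is carried out on a two-dimensional manifold $\partial X$ rather than on a curve. By Lemma~\ref{LEM:LOW} the two normalizing factors in $\cC(\by_1,\by_2)$ are $\cO(1)$ and bounded below, so it suffices to estimate the raw inner product. Write $\theta_i(\bx)\in[0,\pi]$ for the polar angle of $\bx-\by_i$ with respect to the positive $z$-axis. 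The disjoint-projection hypothesis forces the projections of $\bx\in X$ and $\by_i\in Y$ onto both the $z$-axis and the $xy$-plane to remain uniformly separated by a positive constant, so $\theta_i(\bx)\in[c,\pi-c]$ for some absolute $c>0$. This is precisely the regime in which the Darboux expansion invoked inside Lemma~\ref{LEM:YN0} applies uniformly.

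Substituting Darboux and using $2\cos\alpha\cos\beta=\cos(\alpha-\beta)+\cos(\alpha+\beta)$, the product $P_n(\cos\theta_1)P_n(\cos\theta_2)$ splits into a smooth amplitude $u(\bx;\by_1,\by_2)$ of size $\cO(n^{-1})$ (containing $E(\bx,\by_i)/|\bx-\by_i|^2$ and $1/\sqrt{\sin\theta_1\sin\theta_2}$) multiplied by the oscillating factors $\cos((n+\tfrac12)(\theta_1-\theta_2))$ and $\sin((n+\tfrac12)(\theta_1+\theta_2))$, plus $\cO(n^{-3/2})$ Darboux remainders. The prefactor $(2n+1)/(4\pi)$ from $Y_{n0}^2$ then cancels the $1/n$, making the effective amplitude $\cO(1)$ in $n$. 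The $(\theta_1+\theta_2)$-contribution has phase of magnitude $\cO(1)$ with nonvanishing gradient on $X$ (once more exploiting the geometric hypothesis to rule out the singular direction in which $\bx-\by_i$ is parallel to the $z$-axis), so repeated nonstationary-phase integration by parts in $\bx$ delivers arbitrary polynomial decay in $n$, which is swallowed by $\cO(\tilde n^{-2})$ since $\tilde n\le n\,\mathrm{diam}(Y)$.

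The binding term is therefore the $(\theta_1-\theta_2)$-oscillation. Setting $\tilde n=n|\by_1-\by_2|$ and $\phi(\bx)=(\theta_1(\bx)-\theta_2(\bx))/|\by_1-\by_2|$, the integral takes the form $\int_X e^{i\tilde n\phi(\bx)}u(\bx)\,d\bx$. A Taylor expansion in $\by$ about the midpoint of $\by_1$ and $\by_2$ gives $\nabla_\bx\phi\approx-(\nabla_\by\nabla_\bx\theta)|_{\by_0}\cdot(\by_2-\by_1)/|\by_1-\by_2|$, so $|\nabla_\bx\phi|=\cO(1)$ and is bounded below by a positive constant uniformly in $\by_1,\by_2\in Y$, provided the Hessian $\nabla_\by\nabla_\bx\theta$ is uniformly nondegenerate on $X\times Y$, which follows from the disjoint-projection hypothesis. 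Applying the adjoint operator $L^\ast=-\nabla\cdot(|\nabla\phi|^{-2}\nabla\phi)$ exactly as in the 2D proof yields a bulk term of size $\cO(\tilde n^{-2})$ plus a boundary integral on $\partial X$ carrying a factor $\tilde n^{-1}$. Since $\partial X$ is two-dimensional, stationary-phase analysis on $\partial X$ with non-degenerate isolated critical points of $\phi|_{\partial X}$ supplies an extra $\tilde n^{-1}$, for an overall $\cO(\tilde n^{-2})$ and $\alpha=2$ generically. In the degenerate case where a level set of $\phi$ coincides with a portion of $\partial X$, the boundary integral is only $\cO(1)$ in $\tilde n$, giving $\cO(\tilde n^{-1})$ and $\alpha=1$; in every case $1\le\alpha\le 2$ as required.

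The principal obstacle is the uniform lower bound $|\nabla_\bx\phi|\ge c>0$ on $X\times Y$: the argument needs both halves of the disjoint-projection hypothesis, since a single projection constraint alone would leave room for $\nabla_\bx\theta_1$ and $\nabla_\bx\theta_2$ to cancel along a cone of configurations of $(\bx,\by_1,\by_2)$. A secondary but routine task is to check that the quadratic combination of $\cO(n^{-3/2})$ Darboux remainders and their cross terms with the leading oscillatory factors contribute strictly subdominant corrections to the final bound. The remaining integration-by-parts and stationary-phase bookkeeping is a direct three-dimensional adaptation of the calculation carried out in the proof of Lemma~\ref{THM:LOW}.
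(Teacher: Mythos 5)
Your overall strategy is the same as the paper's: normalize via Lemma~\ref{LEM:LOW}, expand $P_n(\cos\theta_i)$ by the Darboux/Szeg\H{o} asymptotics, split the product into a difference phase $\theta_1-\theta_2$ and a sum phase $\theta_1+\theta_2$, and run non-stationary-phase integration by parts over $X$ followed by stationary phase on the two-dimensional surface $\partial X$, which yields the same exponents $1\le\alpha\le 2$. However, there is a genuine gap at what you yourself call the principal obstacle: the uniform lower bound $|\nabla_\bx\tilde{\phi}|\ge \tilde{c}>0$ for $\tilde{\phi}=(\theta_1-\theta_2)/|\by_1-\by_2|$. You derive it from the linearization $\nabla_\bx\tilde{\phi}\approx -(\nabla_\by\nabla_\bx\theta)|_{\by_0}\cdot(\by_2-\by_1)/|\by_1-\by_2|$ together with an \emph{asserted} uniform nondegeneracy of the mixed Hessian. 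Two problems. First, that nondegeneracy is precisely the nontrivial geometric content and is not proved: writing $\theta$ for the polar angle of $\bx-\by$, one checks that $\nabla^2\theta$ degenerates (in the azimuthal direction) exactly when $\theta=\pi/2$, i.e.\ when $\bx-\by$ is horizontal, and it is only the disjointness of the $z$-axis projections that excludes this configuration --- that verification is the heart of the lemma. Second, even granting nondegeneracy, the linearization only controls the regime $|\by_1-\by_2|\to 0$; the asymptotic parameter is $n|\by_1-\by_2|$, so $\by_1,\by_2$ may remain at distance $\cO(\mathrm{diam}(Y))$, and since the Hessian of the polar angle is indefinite, pointwise nondegeneracy along the segment joining $\by_1$ to $\by_2$ does not preclude cancellation in $\nabla_\bx\theta_1-\nabla_\bx\theta_2=\int_0^1(\nabla_\by\nabla_\bx\theta)(\bx,\by_t)\,(\by_1-\by_2)\,dt$. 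The paper sidesteps both issues by computing $|\nabla\tilde{\phi}|^2=|\by_1-\by_2|^{-2}\bigl(r_1^{-2}+r_2^{-2}-2(r_1r_2)^{-1}[\cos\theta_1\cos\theta_2\cos(\varphi_1-\varphi_2)+\sin\theta_1\sin\theta_2]\bigr)$ explicitly, which (for $\by_1\neq\by_2$) vanishes only when $r_1=r_2$ and $\theta_1=\theta_2=\pi/2$, a configuration excluded by the $z$-axis hypothesis uniformly over all separations.

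A secondary error: you claim the sum-phase contribution admits arbitrary polynomial decay in $n$ by repeated integration by parts because $\nabla\hat{\phi}\neq 0$ in $X$. On a bounded domain this is false: each integration by parts leaves an oscillatory boundary integral over $\partial X$ with phase $\hat{\phi}|_{\partial X}$, which generically has stationary points, so the decay saturates at $\cO(\hat{n}^{-\alpha})$ with $1\le\alpha\le 2$, exactly as for the difference phase. This still suffices for the lemma since $\cO(\hat{n}^{-\alpha})\le\cO(\tilde{n}^{-\alpha})$ --- which is how the paper handles the term --- but the ``arbitrary polynomial decay'' claim as stated would fail.
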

\begin{proof}
From Lemma~\ref{LEM:LOW}, the correlation function $\cC(\by_1, \by_2)$'s decay rate only depends on the integral part. We represent $(\bx - \by_i)$ in spherical coordinate as $(r_i, \theta_i, \varphi_i)$, $i=1,2$. From the assumption that $X$ and $Y$ have disjoint projections onto $xy$-plane, 
$\theta_i\in [c', \pi - c']$  for some fixed $c', \frac{\pi}{2} > c' > 0$. From the convexity, $|\varphi_1 - \varphi_2| < \pi - c''$ for some fixed $c'' > 0$.  Using the approximation for Legendre polynomial in Theorem 8.21.4 of~\cite{Szeg-Book39}, we have
	 \begin{equation}\label{EQ:LEGEND}
	 \begin{aligned}
	 P_n(\cos\theta) &= \left(\frac{2}{\pi \hat{n} \sin\theta}\right)^{1/2} \cos\left(\hat{n}\theta - \frac{\pi}{4}\right) \\&+ \frac{\cos\theta}{8}\left(\frac{2}{\pi\hat{n}^3\sin^3\theta}\right)^{1/2}\cos\left(\hat{n}\theta- \frac{3}{4}\pi\right)+\cO\left(\frac{1}{{\hat{n}}^{5/2}}\right)\,,
	 \end{aligned}
	 \end{equation}
	 where $\hat{n} = n + \frac{1}{2}$. Since $\theta_i \in [c', \pi - c']$, $\sin\theta_1$ and $\sin\theta_2$ are both $\cO(1)$. Therefore,
 \begin{equation}
 \begin{aligned}
 \int_X G_{n0}(\bx, \by_1) \overline{G_{n0}(\bx, \by_2)} d\bx &= \frac{\hat{n}}{2\pi}\int_X \frac{E(\bx, \by_1) E(\bx, \by_2)}{|\bx - \by_1|^2 |\bx - \by_2|^2} P_{n}(\cos\theta_1) P_{n}(\cos\theta_2) d\bx  \\
 &= \cL_1 + \cL_2 + \cL_3 + \cO(\hat{n}^{-2})\,,
 \end{aligned}
 \end{equation}
 where $\cL_1, \cL_2$ and $\cL_3$ are 
 \begin{equation}
 \begin{aligned}
 \cL_1 &= \frac{1}{\pi^2} \int_X \frac{E(\bx, \by_1) E(\bx, \by_2)}{|\bx - \by_1|^2 |\bx - \by_2|^2}\frac{1}{\sqrt{\sin\theta_1\sin\theta_2}} \cos\left(\hat{n}\theta_1 - \frac{\pi}{4}\right) \cos\left(\hat{n}\theta_2 - \frac{\pi}{4}\right)  d\bx\,, \\
 \cL_2 &= \frac{1}{8\hat{n}\pi^2} \int_X \frac{E(\bx, \by_1) E(\bx, \by_2)}{|\bx - \by_1|^2 |\bx - \by_2|^2}\frac{\cos\theta_2}{\sqrt{\sin\theta_1\sin^3\theta_2}} \cos\left(\hat{n}\theta_1 - \frac{\pi}{4}\right) \cos\left(\hat{n}\theta_2 - \frac{3\pi}{4}\right) d\bx\,, \\
 \cL_3 &= \frac{1}{8\hat{n}\pi^2} \int_X \frac{E(\bx, \by_1) E(\bx, \by_2)}{|\bx - \by_1|^2 |\bx - \by_2|^2}\frac{\cos\theta_1}{\sqrt{\sin^3\theta_1\sin\theta_2}} \cos\left(\hat{n}\theta_1 - \frac{3\pi}{4}\right) \cos\left(\hat{n}\theta_2 - \frac{\pi}{4}\right) d\bx\,.
 \end{aligned}
 \end{equation}
To simplify the presentation, we again introduce the new variables:
 \begin{equation}\label{EQ:DEF}
 \begin{aligned}
 \tilde{n} &= \hat{n} |\by_1 - \by_2|\,,\\
 \tilde{\phi}(\bx) &= \frac{\theta_1 - \theta_2}{|\by_1 - \by_2|},\\
 \hat{\phi}(\bx) & = \theta_1 + \theta_2\,,\\
 u(\bx) &= \frac{E(\bx, \by_1)E(\bx, \by_2)}{\pi^2|\bx - \by_1|^2 |\bx - \by_2|^2}\frac{1}{\sqrt{\sin\theta_1 \sin\theta_2}}\,,\\
 v(\bx) &= \frac{E(\bx, \by_1)E(\bx, \by_2)}{8\pi^2|\bx - \by_1|^2 |\bx - \by_2|^2}\frac{\cos\theta_2}{\sqrt{\sin\theta_1 \sin^3\theta_2}}\,,\\
 w(\bx) &=\frac{E(\bx, \by_1)E(\bx, \by_2)}{8\pi^2|\bx - \by_1|^2 |\bx - \by_2|^2}\frac{\cos\theta_1}{\sqrt{\sin^3\theta_1 \sin\theta_2}}\,.
 \end{aligned}
 \end{equation}
 It is then clear that $|\tilde{\phi}| \le \cO(1/\operatorname{\rm dist}(X,Y))$ and $\hat{\phi} \in[2c', 2\pi-2c']$.  Obviously $\hat{\phi}$ does not have any stationary points in $X$, which means there exists a positive constant $\hat{c}>0$ such that $|\nabla \hat{\phi}|>\hat{c}$ in $X$. For $\tilde{\phi}$,
we compute $|\nabla \tilde{\phi}|^2$ as follows,
 \begin{equation}
 |\nabla \tilde{\phi}|^2 = \frac{1}{|\by_1 - \by_2|^2}\left( \frac{1}{r_1^2} + \frac{1}{r_2^2} - \frac{2}{r_1r_2}\left[ \cos\theta_1 \cos\theta_2 \cos(\varphi_1 - \varphi_2) + \sin\theta_1\sin\theta_2\right] \right).
 \end{equation}
 Since $|\varphi_1 - \varphi_2| < \pi - c''$, the stationary points happen only when $\theta_1 = \theta_2 = \frac{\pi}{2}$ and $r_1 = r_2$.  Because $X$ and $Y$ have disjoint projections on $z$-axis, then these stationary points do not appear either, which means there exists a positive constant $\tilde{c} > 0$ such that $|\nabla \tilde{\phi}| > \tilde{c}$ in $X$ as well.

Because $\theta_1, \theta_2$ are away from both $0$ and $\pi$, then $u, v, w$ are smooth functions for $\bx \in X$ and $\by\in Y$. The integrals $\cL_1$, $\cL_2$ and $\cL_3$ are represented as
\begin{equation}
\begin{aligned}
\cL_1(\bx) &= \frac{1}{2}\Re \left[\int_{X} u(\bx) e^{i\tilde{n}\tilde{\phi}(\bx)}d\bx \right]+ \frac{1}{2}\Im\left[\int_X u(\bx) e^{i \hat{n} \hat{\phi}(\bx)} d\bx \right]\,, \\
\cL_2(\bx) &= -\frac{1}{2\hat{n}}\Im \left[\int_{X} v(\bx) e^{i\tilde{n}\tilde{\phi}(\bx)}d\bx \right]- \frac{1}{2\hat{n}}\Re\left[\int_X v(\bx) e^{i \hat{n} \hat{\phi}(\bx)}d\bx \right]\,, \\
\cL_3(\bx) &= \frac{1}{2\hat{n}}\Im \left[\int_{X} w(\bx) e^{i\tilde{n}\tilde{\phi}(\bx)}d\bx \right]- \frac{1}{2\hat{n}}\Re\left[\int_X w(\bx) e^{i \hat{n} \hat{\phi}(\bx)}d\bx \right]\,.
\end{aligned}
\end{equation}
Similar to the proof of Lemma~\ref{THM:LOW}, let us introduce the operators:
\begin{equation}\label{EQ: OPERATOR}
\begin{aligned}
\tilde{L} = \frac{1}{|\nabla\tilde{\phi}|^2}[\nabla \tilde{\phi}]\cdot \nabla,\quad \tilde{L}^{\ast} = -\nabla \cdot \frac{1}{|\nabla \tilde{\phi}|^2} \nabla \tilde{\phi}\,,
\end{aligned}
\end{equation}
We can verify that
\begin{equation}\label{EQ:INTGL BY PART}
\begin{aligned}
\int_{X} u(\bx) e^{i\tilde{n}\tilde{\phi}(\bx)}d\bx &= \frac{1}{i\tilde{n}} \int_X \left[\tilde{L} e^{i\tilde{n}\tilde{\phi} (\bx)}\right] u(\bx) d\bx\\
&=-\frac{1}{\tilde{n}^2}\left[ \int_X  e^{i\tilde{n}\tilde{\phi}(\bx)}[\tilde{L}^{\ast}]^2 u(\bx) d\bx  + \int_{\partial X} |\nabla\tilde{\phi}(\bx)|^{-2} \bn(\bx)\cdot \nabla \tilde{\phi}(\bx) e^{i\tilde{n}\tilde{\phi}(\bx)} \tilde{L}^{\ast} u(\bx) dS(\bx)\right]\\
&\quad + \frac{1}{i\tilde{n}}\int_{\partial X} |\nabla\tilde{\phi}(\bx)|^{-2}\bn(\bx)\cdot \nabla \tilde{\phi}(\bx) e^{i\tilde{n} \tilde{\phi}(\bx)} u(\bx) dS(\bx) \,.
\end{aligned}
\end{equation}
For the first term on right-hand-side of~\eqref{EQ:INTGL BY PART}, we have the estimates
\begin{equation}
\frac{1}{\tilde{n}^2}\left[ \int_X  e^{i\tilde{n}\tilde{\phi}(\bx)}[\tilde{L}^{\ast}]^2 u(\bx) d\bx  + \int_{\partial X} |\nabla\tilde{\phi}(\bx)|^{-2} \bn(\bx)\cdot \nabla \tilde{\phi}(\bx) e^{i\tilde{n}\tilde{\phi}(\bx)} \tilde{L}^{\ast} u(\bx) dS(\bx)\right] \le \cO(\tilde{n}^{-2}),
\end{equation}
The second term is an oscillatory integral on the surface $\partial X$. When $\tilde{\phi}$ has only non-degenerated isolated stationary points on $\partial X$, from stationary phase theory, 
\begin{equation}\label{EQ:BD3}
\Big|\frac{1}{i\tilde{n}}\int_{\partial X} |\nabla\tilde{\phi}(\bx)|^{-2}\bn(\bx)\cdot \nabla \tilde{\phi}(\bx) e^{i\tilde{n} \tilde{\phi}(\bx)} u(\bx) dS(\bx)\Big| \le \cO(\tilde{n}^{-2}),\quad \tilde{n}\to\infty\,.
\end{equation}
Therefore we have the estimate
\begin{equation}
\Big|\int_{X} u(\bx) e^{i\tilde{n}\tilde{\phi}(\bx)}d\bx\Big|  \le \cO(\tilde{n}^{-2}), \quad \tilde{n}\to\infty\,.
\end{equation}
When $\tilde{\phi}$ has degenerated isolated stationary points on $\partial X$, then there is an $1\le \alpha \le 2$ such that the boundary integral~\eqref{EQ:BD3} is bounded by $\cO(\tilde{n}^{-\alpha})$, then
\begin{equation}
\Big|\int_{X} u(\bx) e^{i\tilde{n}\tilde{\phi}(\bx)}d\bx\Big|  \le \cO(\tilde{n}^{-\alpha}), \quad \tilde{n}\to\infty\,.
\end{equation}
When $\tilde{\phi}$ has non-isolated stationary points, which means $\partial X$ coincides with part of the level set of $\tilde{\phi}$, then the boundary integral~\eqref{EQ:BD3} will be $\cO(\tilde{n}^{-1})$ instead. Hence in this case, we have
\begin{equation}
\Big|\int_{X} u(\bx) e^{i\tilde{n}\tilde{\phi}(\bx)}d\bx\Big|  \le \cO(\tilde{n}^{-1}), \quad \tilde{n}\to\infty\,.
\end{equation}  
Since $\hat{\phi}$ does not have stationary points in $X$,  same analysis can be applied to conclude that there is an $1\le \alpha \le 2$ such that
\begin{equation}
\Big|\int_{X} u(\bx) e^{i\hat{n}\hat{\phi}(\bx)}d\bx\Big|  \le \cO(\hat{n}^{-\alpha}) \le \cO(\tilde{n}^{-\alpha}), \quad \hat{n}\to\infty\,.
\end{equation} 
From the relation $|\cL_2| \le \tilde{n}^{-1}\cO(\cL_1)$ and $|\cL_3|\le \tilde{n}^{-1} \cO(\cL_1)$ as $\tilde{n}\to\infty$, we complete the proof.

\end{proof}
\begin{remark}
	When projection of $\by$ onto $xy$-plane is contained in the projection of $X$ onto $xy$-plane, the function approximation~\eqref{EQ:LEGEND} is not applicable anymore.
\end{remark}

The above lemma identifies a length scale ${n}^{-1}$ at which the kernel function $G_{n0}(\cdot, \by)$ decorrelates. From this decorrelating behavior, we follow the approach in ~\cite{EnZh-CPAM18} to show a lower bound for the dimension of a linear space in $L^2(X)$ that can approximate a discrete set of functions $G_{n0}(\bx,\by_m)$ to an $\eps$ error in root mean square sense. 

\begin{lemma}\label{lemmaPCA}
	Let $X$ and $Y$ be two disjoint convex compact domains in $\mathbb{R}^3$. Suppose $X$ and $Y$ have disjoint projections onto $xy$-plane and $z$-axis, then for any $\delta \in (0, 1)$, there are points $\by_m\in Y$, $m=1,2,\dots, M_{\delta}\sim n^{3-3\delta}$, such that the matrix $A = [a_{mk}]_{M_{\delta}\times M_{\delta}}$ with entry $a_{mk} = \cC(\by_m, \by_k)$ satisfies the following:
    let $\lambda_1 \ge \lambda_2\ge \dots\ge \lambda_{M_{\delta}}\ge 0$ be the eigenvalues of $A$ and $N_{\delta}^{\eps} = \min M$ such that $\sum_{m=M+1}^{M_{\delta}} \lambda_m \le \eps^2 \sum_{m=1}^{M_{\delta}} \lambda_{m}$. If the correlation function $\cC(\by_m, \by_k)$ satisfies
    \begin{equation}
    |\cC(\by_m, \by_k)|\le \cO((n|\by_m - \by_k|)^{-\alpha})\,,
    \end{equation}
    then
    \begin{equation}
    N_{\delta}^{\eps}\ge
    \begin{cases}
          \cO(n^{3-3\delta}), &\text{ if $\alpha\ge \frac{3}{2}$},\\
        \cO(n^{2\alpha}), &\text{ if $\alpha < \frac{3}{2}$}.
    \end{cases}
    \end{equation}
\end{lemma}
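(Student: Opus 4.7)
The plan is to choose the $\by_m$'s as a regular cubic grid in $Y$ with spacing $h=c\,n^{-1+\delta}$, producing $M_\delta\sim h^{-3}=n^{3-3\delta}$ points (a cube of fixed side length embeds in $Y$ since $Y$ is a convex compact set with non-empty interior), and then to control the eigenvalue distribution of the correlation matrix $A$ through its trace and Frobenius norm via a Cauchy--Schwarz step. For any fixed $\delta>0$ and any distinct pair $m\ne k$, one has $n|\by_m-\by_k|\ge nh=n^\delta\to\infty$, so the correlation estimate~\eqref{EQ: ALPHA} applies uniformly to every off-diagonal entry for all $n$ sufficiently large.

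Since $A$ is the Gram matrix of the normalized kernels $G_{n0}(\cdot,\by_m)/\|G_{n0}(\cdot,\by_m)\|_2$ in $L^2(X)$, it is positive semidefinite with $a_{mm}=1$, so $\sum_m\lambda_m=\tr(A)=M_\delta$ and $\lambda_m\ge 0$. The Frobenius norm I would control row by row, writing $|\by_m-\by_k|=h|\vec j|$ with $\vec j\in\bbZ^3\setminus\{0\}$ the relative grid offset:
\begin{equation*}
\sum_{k\ne m}|a_{mk}|^2\le C(nh)^{-2\alpha}\sum_{0<|\vec j|\le L}|\vec j|^{-2\alpha},\qquad L\sim n^{1-\delta}.
\end{equation*}
A three-dimensional integral test gives $\sum_{0<|\vec j|\le L}|\vec j|^{-2\alpha}=\cO(1)$ for $\alpha>3/2$, $\cO(\log L)$ at $\alpha=3/2$, and $\cO(L^{3-2\alpha})$ for $\alpha<3/2$, so
\begin{equation*}
\|A\|_F^2\le M_\delta+\cO\!\left(M_\delta\cdot\max\{n^{-2\alpha\delta}\log n,\;n^{3-2\alpha-3\delta}\}\right).
\end{equation*}

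The key inequality is then obtained by Cauchy--Schwarz applied to the top $N_\delta^\eps$ eigenvalues: nonnegativity and the definition of $N_\delta^\eps$ give $\sum_{m=1}^{N_\delta^\eps}\lambda_m \ge (1-\eps^2)M_\delta$, while $\bigl(\sum_{m=1}^{N}\lambda_m\bigr)^2 \le N\sum_{m=1}^{N}\lambda_m^2 \le N\|A\|_F^2$, whence $N_\delta^\eps\ge(1-\eps^2)^2 M_\delta^2/\|A\|_F^2$. For $\alpha\ge 3/2$ the denominator reduces to $\cO(M_\delta)$ and the bound collapses to $N_\delta^\eps\ge \cO(M_\delta)=\cO(n^{3-3\delta})$. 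For $\alpha<3/2$, taking $\delta$ small enough that $3-2\alpha-3\delta>0$ makes the off-diagonal term dominant and the ratio simplifies to $\cO(n^{6-6\delta}/n^{6-2\alpha-6\delta})=\cO(n^{2\alpha})$.

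The main technical point I anticipate is keeping the row-sum estimate uniform in $m$: boundary grid points have fewer neighbors (which only strengthens the bound), but the constants in~\eqref{EQ: ALPHA} must remain uniform over $Y\times Y$, which follows from the geometric hypotheses (disjoint projections onto the $xy$-plane and $z$-axis) invoked globally in the proof of Lemma~\ref{LEM:3D1}. The borderline case $\alpha=3/2$ carries a $\log n$ factor in the Frobenius bound, but this is $o(M_\delta)$ for fixed $\delta>0$, so the $n^{3-3\delta}$ bound persists across the threshold.
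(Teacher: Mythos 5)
Your proposal is correct and follows essentially the same route as the paper: a uniform grid of spacing $h=n^{\delta-1}$ giving $M_\delta\sim n^{3-3\delta}$ points, the trace/Frobenius-norm bound $N_\delta^{\eps}\ge (1-\eps^2)^2M_\delta^2/\operatorname{tr}(A^TA)$ via Cauchy--Schwarz on the leading eigenvalues, and the layer-by-layer (shell) summation of $|a_{mk}|^2$ yielding the three regimes $\alpha>3/2$, $\alpha=3/2$, $\alpha<3/2$. The only cosmetic difference is that the paper organizes the off-diagonal sum by Hamming distance on box surfaces while you use an integral test over lattice offsets, and the paper records the explicit crossover $\delta\lessgtr(3-2\alpha)/3$ where the two lower bounds meet, which your ``take $\delta$ small enough'' remark captures.
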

\begin{proof}
    Without loss of generality, we assume that $Y$ contains a unit cube. Then take $\by_m, m=1, 2, \ldots, M_{\delta}\sim n^{3-3\delta}$, the grid points of a uniform grid in $Y$ with grid size $h = n^{\delta-1}$, $\delta \in (0, 1)$. The matrix $A = [a_{mk}]_{M_{\delta}\times M_{\delta}}$, $a_{mk}=\cC(\by_m, \by_k)$, has the following properties
    \begin{equation}
    a_{mm} = 1,\quad  
    |a_{mk}| \le \cO((n |\by_m - \by_k|)^{-\alpha})\,,
    \end{equation}
    where $1\le\alpha \le 2$. Meanwhile, we have
    \begin{equation}
       \sum_{m=1}^{M_{\delta}} \lambda_m = \mathrm{tr}(A) = M_{\delta}\,.
    \end{equation}
    By the definition of $N^\eps$, that is, $N^{\eps} = \min M$ such that $\sum_{m=M+1}^{N_{\delta}} \lambda_m \le \eps^2 \sum_{m=1}^{N_{\delta}} = \eps^2 M_{\delta}$,  we obtain
    \begin{equation}
    \sum_{m=1}^{N_{\delta}^{\eps}} \lambda_m \ge (1-\eps^2) \sum_{m=1}^{M_{\delta}} \lambda_m = (1-\eps^2) M_{\delta} 
    \end{equation}
    and
    \begin{equation}
    \sum_{m=1}^{M_{\delta}} \lambda_m^2 > \sum_{m=1}^{N_{\delta}^{\eps}} \lambda_m^2 \ge N_{\delta}^{\eps}\left[ \frac{(1-\eps^2)M_{\delta}}{N_{\delta}^{\eps}}\right]^2 = \frac{[(1-\eps^2)M_{\delta}]^2}{N_{\delta}^{\eps}}\,.
    \end{equation}
    Therefore
    \begin{equation}\label{EQ:ESTIMATE N}
    N_{\delta}^{\eps} \ge \frac{[(1-\eps^2)M_{\delta}]^2}{\sum_{m=1}^{M_{\delta}} \lambda_m^2} = \frac{[(1-\eps^2)M_{\delta}]^2}{\text{tr}(A^TA)} = \frac{[(1-\eps^2)M_{\delta}]^2}{\sum_{m=1}^{M_{\delta}}\sum_{k=1}^{N_\delta} |a_{mk}|^2} \,.
    \end{equation}
    On the other hand, denote $\text{dist}(\by_m, \by_k)$ as the Hamming distance between $\by_m$ and $\by_k$, we can estimate the following summation in terms of $\text{dist}(\by_m, \by_k)$:
    \begin{equation}
    \begin{aligned}
    \sum_{m=1}^{M_{\delta}} |a_{mk}|^2 =1 + \sum_{\substack{j=1\\ \text{dist}(\by_m, \by_k) = jh}}^{n^{1-\delta}} |a_{mk}|^2\,. 
    \end{aligned}
    \end{equation}
    The first terms means the case $\by_m = \by_k$.
    The second summation will be grouped into box surfaces by the distances. Using the estimate $|a_{mk}|\le \cO((n|\by_m - \by_k|)^{-\alpha})$ and equivalence between Euclidean distance and Hamming distance, we obtain the bound
    \begin{equation}\label{EQ:LAYER}
    \sum_{\substack{j=1\\ \text{dist}(\by_m, \by_k) = jh}}^{O(n^{1-\delta})} |a_{mk}|^2 \le \sum_{j=1}^{n^{1-\delta}} \cO(j^{2}) \left(\frac{1}{n j h}\right)^{2\alpha} =
    \begin{cases}
    \cO(n^{-2\alpha\delta}) &\text{if $\alpha > \frac{3}{2}$},\\
     \cO(n^{-2\alpha\delta}\log n) &\text{if $\alpha = \frac{3}{2}$},\\
    \cO(n^{3-2\alpha -3\delta}) &\text{if $\alpha < \frac{3}{2}$}.
    \end{cases}
    \end{equation}
    The next step is to combine the $1$ and the estimate~\eqref{EQ:LAYER} for different choices of $\alpha$. This yields the following result.
    \begin{enumerate}
        \item When $\alpha\ge \frac{3}{2}$, we obtain
        \begin{equation}
        \sum_{m=1}^{M_{\delta}} |a_{mk}|^2 = \cO(1)\,.
        \end{equation}
        Using the relation~\eqref{EQ:ESTIMATE N}, we conclude that the lower bound of $N_{\delta}^{\eps}$ satisfies
        \begin{equation}
        N_{\delta}^{\eps} \ge 
        \cO(n^{3-3\delta}).
        \end{equation}
        \item When $\alpha<\frac{3}{2}$, we obtain
        \begin{equation}
        \sum_{m=1}^{M_{\delta}} |a_{mk}|^2 = \begin{cases}
        \cO(1),&\quad \delta > \frac{3-2\alpha}{3}\,, \\
        \cO(n^{3-2\alpha -3\delta}),&\quad \delta \le \frac{3-2\alpha}{3}\,.
        \end{cases}
        \end{equation}
        Therefore the lower bound of $N_{\delta}^{\eps}$ satisfies
        \begin{equation}
        N_{\delta}^{\eps} \ge \begin{cases}
        \cO(n^{3-3\delta}), &\quad \delta > \frac{3-2\alpha}{3}\,,\\
        \cO(n^{2\alpha}),&\quad \delta \le \frac{3-2\alpha}{3}\,.
        \end{cases}
        \end{equation}
        When $\delta = \frac{3-2\alpha}{2\alpha}$, the above lower bounds join at $\cO(n^{2\alpha})$.
    \end{enumerate}

\end{proof}

\begin{remark}
The above lemma is equivalent to the principal component analysis (PCA) of the set of unit vectors $\frac{G_{n0}(\cdot, \by_m)}{\|G_{n0}(\cdot, \by_m)\|_2}, m=1, 2, \ldots, M_{\delta}$ in $L^2(X)$.
The leading $N^{\eps}_{\delta}$ eigenvectors of $A$ form an orthonormal basis of the best linear space of dimension $N^{\eps}_{\delta}$ that approximates the set of functions $\frac{G_{n0}(\cdot, \by_m)}{\|G_{n0}(\cdot, \by_m)\|_2}$ in least square sense in $L^2(X)$. Since $\|G_{n0}(\cdot, \by)\|_2$ is uniformly bounded for $\by\in Y$, it is easy to see that  bounds of the same order hold for $G_{n0}(\cdot, \by_m), m=1, 2, \ldots, M_{\delta}$.
\end{remark}


With the above estimate in the discrete setting for $\by\in Y$, we can follow the technique in \cite{EnZh-CPAM18} to derive the following theorem for continuous case in $L^2(X\times Y)$ with a two-grid approach. We first use a grid (finer than the grid used in Lemma~\ref{lemmaPCA}) to approximate the integration over $Y$ by an integral of a piecewise constant function in $\by$ on the fine grid, which reduces the continuous case to a discrete setting. We then apply Lemma~\ref{lemmaPCA} to a coarse grid used in the lemma.
\begin{theorem}\label{THM:3D2}
Let $X$ and $Y$ be two disjoint convex compact domains in $\mathbb{R}^3$. Assume that $X$ and $Y$ have disjoint projections onto $xy$-plane and $z$-axis. For any $\eps > 0$, if there are functions $f_l(\bx)\in L^2(X)$ and $g_l(\by)\in L^2(Y)$, $l=1,\dots, N^{\eps}$, such that
\begin{equation}
\Big\|G_{n0}(\bx, \by) - \sum_{l=1}^{N^{\eps}} f_l(\bx) g_l(\by) \Big\|_{L^2(X\times Y)} \le \eps \|G_{n0}\|_{L^2(X\times Y)}\,.
\end{equation}
then as $n\to\infty$,
\begin{equation}
N^{\eps}\ge
\begin{cases}
\cO(n^{3-3\delta}), &\text{ if $\frac{3}{2}\le \alpha\le 2$},\\
\cO(n^{2\alpha}), &\text{ if $1\le \alpha <\frac{3}{2}$},
\end{cases}
\end{equation}
where $\alpha$ is defined in~\eqref{EQ: ALPHA} and $\delta\in(0,1)$ is an arbitrary number. 
\end{theorem}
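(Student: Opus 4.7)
The plan is to reduce the continuous $L^2(X\times Y)$ separability statement to the discrete PCA-type estimate in Lemma~\ref{lemmaPCA} via a two-grid argument, following the strategy developed in~\cite{EnZh-CPAM18}. The overall idea is to use a fine grid on $Y$ to turn the continuous $L^2$ approximation bound into a discrete least-square approximation bound at points, and then to subsample to the coarse grid of spacing $h = n^{\delta-1}$ used in Lemma~\ref{lemmaPCA}, where the eigenvalue counting argument can be directly applied.

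First I would choose a fine grid $\{\by_{m'}\}\subset Y$ with spacing $h'$ chosen polynomially smaller than $1/n$, and interpret the $L^2(X\times Y)$ norm as a Riemann sum in $\by$ with controlled quadrature error. Because $X$ and $Y$ are disjoint compact sets, $G_{n0}(\bx,\by)$ is smooth in $\by$ on $Y$ for each $\bx\in X$, and its gradient in $\by$ can be bounded uniformly (the oscillation scale in $\by$ is $1/n$, so derivatives grow only polynomially in $n$). Choosing $h'$ small enough, the piecewise-constant approximation error in the $\by$ variable is negligible compared to $\eps\,\|G_{n0}\|_{L^2(X\times Y)}$. Combined with Fubini and the hypothesis, this yields
\[
(h')^{3}\sum_{m'}\Bigl\|G_{n0}(\cdot,\by_{m'})-\sum_{l=1}^{N^{\eps}}f_l(\cdot)\,g_l(\by_{m'})\Bigr\|_{L^2(X)}^{2}\le C\eps^{2}\,\|G_{n0}\|_{L^2(X\times Y)}^{2}.
\]

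Next I would restrict the sum to the coarse subgrid $\{\by_m\}_{m=1}^{M_\delta}$ of spacing $h=n^{\delta-1}$ appearing in Lemma~\ref{lemmaPCA}, which is a subset of the fine grid. Let $V=\mathrm{span}(f_1,\ldots,f_{N^{\eps}})\subset L^{2}(X)$ and $P_V$ be the orthogonal projection onto $V$. Bounding the best $V$-approximation by the particular expansion $\sum_{l}f_l\,g_l(\by_m)$ and using Lemma~\ref{LEM:LOW} to normalize (so that $\|G_{n0}(\cdot,\by)\|_{L^2(X)}$ is uniformly $\cO(1)$ for $\by\in Y$), I obtain the discrete estimate
\[
\sum_{m=1}^{M_\delta}\bigl\|G_{n0}(\cdot,\by_m)-P_V G_{n0}(\cdot,\by_m)\bigr\|_{L^{2}(X)}^{2}\le C\eps^{2}\sum_{m=1}^{M_\delta}\|G_{n0}(\cdot,\by_m)\|_{L^{2}(X)}^{2}.
\]
This is precisely the statement that the $M_\delta$ unit vectors $G_{n0}(\cdot,\by_m)/\|G_{n0}(\cdot,\by_m)\|_2$ can be approximated in least-square sense by the $N^{\eps}$-dimensional subspace $V$, to relative accuracy $\eps$.

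Finally, by the Courant--Fischer characterization of eigenvalues, any such $V$ must contain at least as many directions as are needed to capture $1-\eps^{2}$ of the trace of the Gram matrix $A=[\cC(\by_m,\by_k)]$, so $N^{\eps}\ge N^{\eps}_\delta$. Lemma~\ref{LEM:3D1} supplies the correlation decay $|\cC(\by_m,\by_k)|\le \cO((n|\by_m-\by_k|)^{-\alpha})$ for some $1\le\alpha\le 2$, so Lemma~\ref{lemmaPCA} immediately yields $N^{\eps}\ge \cO(n^{3-3\delta})$ when $\alpha\ge 3/2$ and $N^{\eps}\ge \cO(n^{2\alpha})$ when $1\le\alpha<3/2$, and $\delta\in(0,1)$ is arbitrary. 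The main obstacle is the first step: I need quantitative control of $\partial_{\by}G_{n0}$ on $X\times Y$ to justify that the fine-grid spacing $h'$ can be chosen so that the quadrature error does not contaminate the $\eps$-bound. This is where the disjointness of $X$ and $Y$ is essential, because it keeps $G_{n0}$ smooth and its derivatives polynomial in $n$, so $h'$ can be taken as $n^{-p}$ for a sufficiently large fixed $p$ without affecting any of the asymptotic bounds.
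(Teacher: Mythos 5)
Your overall strategy is the same two-grid reduction the paper sketches (fine grid to discretize the $\by$-integration, coarse grid of spacing $h=n^{\delta-1}$ to invoke Lemma~\ref{lemmaPCA} together with the correlation decay of Lemma~\ref{LEM:3D1} and the normalization of Lemma~\ref{LEM:LOW}), so the architecture is right. However, the pivotal middle step --- ``restrict the sum to the coarse subgrid'' --- does not follow as written. Your fine-grid inequality gives
$\sum_{m'} e(\by_{m'}) \le C\eps^{2}(h')^{-3}\|G_{n0}\|^{2}_{L^2(X\times Y)}$, where $e(\by)=\|G_{n0}(\cdot,\by)-P_V G_{n0}(\cdot,\by)\|_{L^2(X)}^2$ and the right-hand side is of order $\eps^2 N'$ with $N'=(h')^{-3}\gg M_\delta$ the number of fine-grid points. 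Discarding all but the $M_\delta$ coarse points only bounds the coarse sum by this same quantity, which is far larger than the needed $C\eps^{2}M_\delta$; nothing prevents the error from concentrating precisely on the coarse subgrid. A second, related problem is that your first displayed inequality evaluates $g_l$ pointwise at grid nodes, but $g_l$ is only assumed to be in $L^2(Y)$, so neither $g_l(\by_{m'})$ nor the continuity of $\by\mapsto\|G_{n0}(\cdot,\by)-\sum_l f_l g_l(\by)\|_{L^2(X)}$ is available to control the quadrature error.

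Both issues are repaired by the standard device in~\cite{EnZh-CPAM18}, which you should make explicit: first pass to the projection, i.e.\ use $e(\by)\le\|G_{n0}(\cdot,\by)-\sum_l f_l(\cdot)g_l(\by)\|_{L^2(X)}^2$ for a.e.\ $\by$ and integrate to get $\int_Y e(\by)\,d\by\le\eps^2\|G_{n0}\|^2_{L^2(X\times Y)}=\cO(\eps^2)$; the function $e(\by)$ is smooth because $G_{n0}(\cdot,\by)$ is smooth in $\by$ and $P_V$ is fixed, so no pointwise evaluation of $g_l$ is ever needed. Then, instead of using the coarse grid nodes themselves, partition $Y$ into the $M_\delta$ disjoint cells of side $h$ and in each cell choose a representative $\tilde\by_m$ with $e(\tilde\by_m)\le h^{-3}\int_{\text{cell}}e(\by)\,d\by$ (such a point exists since the minimum does not exceed the average). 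Summing gives $\sum_{m}e(\tilde\by_m)\le h^{-3}\int_Y e\le C\eps^{2}M_\delta$, and the representatives are still mutually separated by $\gtrsim h$, so the correlation bound of Lemma~\ref{LEM:3D1} and hence Lemma~\ref{lemmaPCA} apply to the Gram matrix built on $\{\tilde\by_m\}$. With that substitution your Courant--Fischer conclusion $N^{\eps}\ge N^{\eps}_\delta$ and the final case analysis go through unchanged.
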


\begin{remark}
    Let us point out that the case $|\cC(\by_1, \by_2)| \le  \cO(\tilde{n}^{-2})$ in Lemma~\ref{LEM:3D1} is generic for a general domain $X$ and two points $\by_1,\by_2$ in $Y$. Hence $\cO(n^{3-3\delta}), \forall \delta >0$ is the generic lower bound for the approximate separability $N^{\eps}$ for the 3D integral kernel $G_{n0}(\bx, \by)$.
\end{remark}

\begin{remark}
The kernel $G_{nn}$ (or $G_{n,-n}$) is related to the special spherical harmonics at $|m|= n$ given in the following form
\begin{equation}\label{EQ:YNN}
Y_{n,\pm n}(\theta, \varphi) = \frac{(\mp 1)^n}{2^n n!}\sqrt{\frac{(2n+1)!}{4\pi}}\sin^n \theta e^{\pm i n \varphi}\,.
\end{equation}
With this explicit oscillatory form in $\theta$ and $\varphi$, it is not hard to perform the previous analysis to get the lower bound for certain simple cases. For instance, when the convex compact domains $X$ and $Y$ have disjoint projections on both $z$-axis and $xy$-plane, with similar notations to those in Lemma~\ref{LEM:3D1}, we can show that the correlation function $\cC(\by_1, \by_2)$ involves integral
\begin{equation}
	\int_{X} e^{i n \phi} u(\bx) d\bx,\quad \phi = (\varphi_1 - \varphi_2) - i \log(\sin\theta_1\sin\theta_2)\,,
\end{equation}
which does not have any stationary point inside $X$.
The analysis of an arbitrary kernel $G_{nm}$ involves more sophisticated spherical harmonics, and is therefore much more complicated. 
\end{remark}

%
%
\subsection{Separability's upper bounds}
\label{SEC:UPPER}

We now establish the upper bound for $N^{\eps}$ for the approximate separability of the kernel functions ${G}_n(\bx,\by)$ in 2D and $G_{n0}(\bx,\by)$ in 3D (defined by~\eqref{EQ:K2} and~\eqref{EQ:K3} respectively) in terms of tolerance $\eps$ and $n$ under certain regularity assumptions. In particular, we use polynomials to construct separable approximations in $L^{\infty}(X\times Y)$. Since $X,Y$ are compact, $L^{\infty}$ is stronger than $L^2$. Hence the upper bound holds in  $L^2(X\times Y)$ In our analysis, we assume that ${\sigma}_t(\bx)\in C^{k+1}(\overline{\Omega}), k\ge 1$ is a real-valued function on $\Omega$. 

\subsubsection{The two-dimensional case}

\begin{theorem}\label{THM:LOWRANK}
	Let $X, Y$ be two disjoint compact sets in $\Omega\subset \bbR^2$ and $\bx_c \in X, \by_c \in Y$ be their centers respectively. The distance between the two centers is $|\bx_c - \by_c| =  \rho = \cO(1)$ and $\sup_{\bx\in X}|\bx_c - \bx|= \zeta$, $\sup_{\by\in Y}|\by_c - \by|= \eta$. Assume further that $\frac{(\zeta + \eta)}{\rho} < \frac{1}{2}$. Then for any $\eps >0$,  there exits $N^{\eps} \le\cO((n+\log\eps)^2\eps^{-4/(k+1)})$, and functions $f_l\in L^{\infty}(X)$, $g_l\in L^{\infty}(Y)$, $l=1,2,\dots,N^{\eps}$, such that
	\begin{equation}
	\Big\|\frac{E(\bx, \by)}{|\bx - \by|}e^{-in \arg(\bx - \by)} - \sum_{l=1}^{N^{\eps}} f_l(\bx) g_l(\by)\Big\|_{L^{\infty}(X\times Y)} \le \eps \,,
	\end{equation}
    where the constant in the upper bound for $N^{\eps}$ depends on $X$ and $Y$.
\end{theorem}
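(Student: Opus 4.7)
The plan is to split the kernel as $G_n(\bx, \by) = E(\bx, \by) \cdot h_n(\bx, \by)$ with $h_n(\bx, \by) := \frac{e^{-in\arg(\bx-\by)}}{|\bx-\by|}$, approximate each factor separately exploiting its characteristic regularity, and then combine by multiplication. The oscillatory factor $h_n$ is treated via Taylor expansion in $\by$, while the smooth (but only $C^{k+1}$) factor $E$ is treated by polynomial approximation in both variables.

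First I would handle the oscillatory factor $h_n$ by Taylor expansion in $\by$ around $\by_c$. Using complex variables $z_x = x_1 + ix_2$, one has $h_n = (\bar z_x - \bar z_y)^n/|z_x - z_y|^{n+1}$, which is real-analytic on $X \times Y$ since the two sets are disjoint. The crucial estimate is that the derivatives satisfy $|\partial_\by^\alpha h_n(\bx,\by)| \lesssim (Cn/|\bx-\by|)^{|\alpha|}/|\bx-\by|$ for $|\alpha|$ large, which follows from applying Fa\`a di Bruno to the composition of $e^{-ins}/r$ with $s = \arg(\bx-\by)$, $r=|\bx-\by|$ and noting that each derivative in $\by$ of the phase contributes an extra factor $n$. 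The separation hypothesis $(\zeta+\eta)/\rho < 1/2$ guarantees $|\bx - \by| \ge \rho/2$ uniformly on $X\times Y$. Taylor's theorem with remainder together with Stirling's formula then gives $\|R_p\|_{L^\infty(X\times Y)} \lesssim (Cn\eta/(p\rho))^p$, which is bounded by $\eps$ as soon as $p \sim n + \log(1/\eps)$. Since there are $\binom{p+2}{2} \sim p^2$ multi-indices $\alpha \in \mathbb{N}^2$ with $|\alpha| \le p$, the resulting partial Taylor sum is a separable sum in $(\bx,\by)$ with $\cO((n+\log(1/\eps))^2)$ terms.

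Next I would approximate the attenuation factor $E(\bx, \by)$ by a tensor-product polynomial of degree at most $q$ in each of the four coordinates $(x_1, x_2, y_1, y_2)$, for instance by Chebyshev interpolation on $X\times Y$ or by a two-sided Taylor expansion around $(\bx_c, \by_c)$. Since $\sigma_t \in C^{k+1}(\overline\Omega)$ implies $E \in C^{k+1}(X\times Y)$, the standard polynomial-approximation error is $\cO(q^{-(k+1)})$, so choosing $q \sim \eps^{-1/(k+1)}$ makes the error at most $\eps$. Written as
\[
\tilde E(\bx, \by) \;=\; \sum_{|\gamma|, |\beta| \le q} d_{\gamma\beta}\,(\bx-\bx_c)^\gamma\,(\by-\by_c)^\beta,
\]
this is a separable representation with $\binom{q+2}{2}^2 = \cO(q^4) = \cO(\eps^{-4/(k+1)})$ terms.

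Finally I combine the two: $\tilde E \cdot \tilde h_n$ is again a separable sum whose number of terms is bounded by the product of the two individual counts, namely $\cO((n + \log(1/\eps))^2\,\eps^{-4/(k+1)})$. The $L^\infty$ error is controlled by the standard product inequality
\[
\|E h_n - \tilde E \tilde h_n\|_\infty \;\le\; \|E - \tilde E\|_\infty \|h_n\|_\infty + \|\tilde E\|_\infty \|h_n - \tilde h_n\|_\infty,
\]
and with $\|h_n\|_\infty \le 1/\mathrm{dist}(X,Y)$ and $\|E\|_\infty \le 1$, both terms are of order $\eps$; a rescaling $\eps \mapsto \eps/C$ absorbs the constant and gives the statement. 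The main obstacle in the argument is the high-order derivative estimate for $h_n$ and, more delicately, balancing the growth $n^{|\alpha|}$ produced by differentiating the phase against the smallness $\eta^{|\alpha|}$ of $(\by-\by_c)^\alpha$; it is precisely the quantitative separation condition $(\zeta+\eta)/\rho < 1/2$ that makes the ratio $n\eta/\rho$ manageable and produces the $n+\log(1/\eps)$ scale for the required Taylor degree.
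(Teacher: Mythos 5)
Your proposal reaches the correct bound but takes a genuinely different route from the paper for the oscillatory factor. The paper splits the kernel as $h(\bx,\by)\,e^{-in\arg(\bx-\by)}$ with $h=E/|\bx-\by|$, handles the $C^{k+1}$ factor $h$ by a \emph{piecewise} polynomial (local Taylor on cells of size $\ell=\cO(\eps^{1/(k+1)})$, giving $\cO(\eps^{-4/(k+1)})$ terms), and treats $e^{-in\arg(\bx-\by)}$ by an explicit multipole-type expansion: binomial expansion of the numerator $\big(|\bx|e^{-i\theta_1}-|\by|e^{-i\theta_2}\big)^n$ combined with the Gegenbauer generating function for $|\bx-\by|^{-n}$, truncated at $N=2n+\cO(|\log\eps|)$, which yields $\cO\big((n+|\log\eps|)^2\big)$ explicit separable terms with all coefficients in hand. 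You instead put the singular factor $1/|\bx-\by|$ with the phase, use a single global polynomial (Jackson/Chebyshev) for $E$, and expand $h_n$ by Taylor in $\by$ about $\by_c$, relying on real-analyticity and derivative growth estimates; the counts $\cO(\eps^{-4/(k+1)})$ and $\cO((n+|\log\eps|)^2)$ come out the same and the product argument at the end is fine. What your approach buys is brevity and independence from special-function identities; what the paper's buys is fully explicit basis functions $(p_{k,l},q_{k,l})$ with controlled coefficients, which matters if one wants to implement the compression.

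The one step you should tighten is the derivative estimate $|\partial_\by^\alpha h_n|\lesssim (Cn/|\bx-\by|)^{|\alpha|}/|\bx-\by|$. Writing $h_n=\bar w^{(n-1)/2}w^{-(n+1)/2}$ with $w=z_x-z_y$, the exact Wirtinger derivatives carry Pochhammer factors $\Gamma(\tfrac{n+1}{2}+i)/\Gamma(\tfrac{n+1}{2})$, so the correct growth is of order $\big(C(n+|\alpha|)/|\bx-\by|\big)^{|\alpha|}$, not $(Cn/|\bx-\by|)^{|\alpha|}$; Fa\`a di Bruno contributes terms where the already-differentiated phase is hit again, producing factorials of $|\alpha|$ in addition to powers of $n$. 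Since the truncation order you need is $p\sim n+\log(1/\eps)$, which exceeds $n$ (and can greatly exceed it when $\eps$ is small), the extra $(1+p/n)^{p}$ cannot be ignored. The argument still closes: the Taylor (negative-binomial) series of $w^{-(n+1)/2}$ about $\by_c$ converges geometrically with ratio $\eta/(\rho-\zeta)<1/2$ under the hypothesis $(\zeta+\eta)/\rho<1/2$, and the tail $\sum_{i>p}\binom{(n+1)/2+i-1}{i}q^i$ with $q<1/2$ is indeed $\le\eps$ for $p=\cO(n+\log(1/\eps))$ with a constant depending on $X,Y$ — but this requires the sharper binomial-coefficient bookkeeping rather than the remainder bound $(Cn\eta/(p\rho))^p$ you quote.
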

\begin{proof}
	Since $X$ and $Y$ are compact and disjoint, and ${\sigma}_t(\bx)\in C^{k+1}(\overline{\Omega})$, $h(\bx, \by):=
	\frac{E(\bx, \by)}{|\bx - \by|}$ is $ C^{k+1}(X\times Y)$. Without loss of generality, we assume that $X\subset \bigcup_{i=1}^{N_X} C_i$ and $Y\subset \bigcup_{j=1}^{N_Y} D_j$, where $C_i$ and $D_j$ are disjoint square cells of size $\ell$ and $N_X = \cO(\ell^{-d})$, $N_Y = \cO(\ell^{-d})$ with $d=2$. Let the centers of $C_i$ and $D_j$ be $\bx_i$ and $\by_j$ respectively. From Taylor expansion, locally for $\bx\in C_i$ and $\by\in D_j$, we have that
	\begin{equation}\label{EQ:EXPAND}
	h(\bx, \by) = \sum_{|\alpha|\le k, |\beta|\le k} \frac{D_{\bx}^{\alpha} D_{\by}^{\beta} h(\bx_i, \by_j)}{\alpha! \beta!} (\bx - \bx_i)^{\alpha}(\by - \by_j)^{\beta} +\cO(\ell^{k+1})\,.
	\end{equation}
We choose the cell size $\ell = \cO(\eps^{1/(k+1)})$ such that the remainder's magnitude in~\eqref{EQ:EXPAND} is strictly less than $\eps$. Then the following $|N_X||N_Y| = \cO(\eps^{-2d/(k+1)})$ function pairs 
\begin{equation}\label{EQ:POLY}
(\chi_{C_i} (\bx - \bx_i)^{\alpha}, \chi_{D_j}(\by - \by_j)^{\beta})\in L^{\infty}(X)\times L^{\infty}(Y),\quad  1\le i\le |N_X|,\; 1\le j \le |N_Y|,\; |\alpha|\le k, \; |\beta|\le k\,,
\end{equation}
$\chi_{K}$ being the characteristic function of set $K$, provide a piecewise polynomial separable approximation to $h(\bx, \by)$ within an error of $\eps$ in $L_{\infty}(X\times Y)$.  Next, let us estimate the separability of $e^{-in\arg(\bx - \by)}$. Since the function is only determined by the relative locations of $\bx$ and $\by$, we assume that the origin is at $\bx_c$ to obtain
\begin{equation}
\frac{|\bx|}{|\by|} = \frac{|\bx - \bx_c|}{|\by -\by_c +\by_c - \bx_c|} \le \frac{\zeta}{\rho - \eta} <  \frac{1}{2}\,.
\end{equation}
On the other hand, 
\begin{equation}~\label{EQ:EXP}
	e^{-i n \arg(\bx - \by)} = \left( \frac{|\bx|\cos\theta_1 - |\by|\cos\theta_2 - i(|\bx|\sin\theta_1 - |\by|\sin\theta_2)}{|\bx - \by|} \right)^n\,,
	\end{equation}
	where $\theta_1$ and $\theta_2$ are the polar angles for $\bx$ and $\by$ respectively. The numerator in~\eqref{EQ:EXP} is
	\begin{equation}\label{EQ:NUM}
\Big[	|\bx|\cos\theta_1 - |\by|\cos\theta_2 - i(|\bx|\sin\theta_1 - |\by|\sin\theta_2) \Big]^n =|\by|^n e^{-in\theta_2}\sum_{k=0}^n \binom{n}{k} (-1)^k|\bx|^k |\by|^{-k}  e^{-ik(\theta_1-\theta_2)}\,.
	\end{equation}
	The denominator of~\eqref{EQ:EXP} can be expressed by the generating function of the Gegenbauer polynomials~\cite{Rainville-Book71} 
	\begin{equation}\label{EQ:DEN}
	\frac{1}{|\bx - \by|^n} = \sum_{s=0}^{\infty} C_{s}^{n/2}(\cos(\theta_1 -\theta_2)) \frac{|\bx|^s}{|\by|^{s+n}} \,,
	\end{equation}
	which is convergent when $|\bx|< |\by|$. Moreover, from the estimate~\cite{Rainville-Book71},
	\begin{equation}
	|C_s^{n/2}(x)| \le C_s^{n/2}(1) = \frac{\Pi_{i=0}^{s-1}(n+i)}{s!},
	\end{equation}
	we have that, when $s > 2n$,
	\begin{equation}
\frac{\Pi_{i=0}^{s}(n+i)}{(s+1)!} \Big\slash 	\frac{\Pi_{i=0}^{s-1}(n+i)}{s!} = \frac{s+n}{s+1} < \frac{3}{2}\,.
	\end{equation}
	Since $\frac{|\bx|}{|\by|}<\frac{1}{2}$, a truncation of $N = 2n + \cO(|\log\eps|)$ terms is needed in~\eqref{EQ:DEN} to achieve an error less than $\eps$. 
	
	The function $C_s^{n/2}(x)$ is a polynomial of order $s$, therefore
	\begin{equation}\label{EQ:GEGEN}
	\begin{aligned}
	C_s^{n/2}(\cos(\theta_1 - \theta_2)) &= C_s^{n/2}\left(\frac{e^{i(\theta_1 -\theta_2)} + e^{-i(\theta_1- \theta_2)}}{2}\right) \\
	&= \sum_{t=0}^s c_{st}^{n/2} \sum_{l=0}^t \frac{1}{2^t}\binom{t}{l} e^{i(t-2l)(\theta_1 - \theta_2)}\,,
	\end{aligned}
	\end{equation}
	where $c_{st}^{n/2}$ is the coefficient of $x^t$ of the Gegenbauer polynomial $C_{s}^{n/2}(x)$. Combine ~\eqref{EQ:NUM},~\eqref{EQ:DEN} and~\eqref{EQ:GEGEN}, 
	\begin{equation}
	\Big| e^{-in\arg(\bx - \by)}- e^{-in\theta_2} \sum_{k=0}^n \sum_{s=0}^N \sum_{t=0}^s \sum_{l=0}^t \binom{n}{k} \binom{t}{l}(-1)^k \frac{1}{2^t}c_{st}^{n/2} |\bx|^{k+s}  |\by|^{-(k+s)}    e^{i(t-2l-k)(\theta_1 - \theta_2)} \Big|\le \eps\,.
	\end{equation} 
	It can be easily seen that $0\le k+s\le 2N$ and $-2N \le t - 2l - k \le 2N$. Therefore we can choose the following functions
	\begin{equation}
	\begin{aligned}
	p_{k,l}(\bx) &= |\bx|^{k} e^{i l \theta_1}, &\quad 0\le k\le 2N, -2N\le l\le 2N\,, \\
	q_{k,l}(\by) &= |\by|^{-k} e^{-i(l+n) \theta_2},&\quad 0\le k \le 2N, -2N \le l \le 2N\,,
	\end{aligned}
	\end{equation}
	and some constants $\gamma_{k,l}$ such that
	\begin{equation}
	\big| e^{-in\arg (\bx - \by)} - \sum_{k=0}^{2N} \sum_{l=-2N}^{2N}\gamma_{k,l} p_{k,l}(\bx) q_{k,l}(\by)  \big| \le \eps\,.
	\end{equation}
	It is now clear that the tensor product of $\{p_{kl}, q_{kl}\}$ with the functions in~\eqref{EQ:POLY} is a choice for the functions $\{f_l, g_l\}$ in the theorem.
\end{proof}
\begin{remark}
When the coefficient $\sigma_t$ is analytic in $\Omega$, one can replace $\eps^{-2d/(k+1)}$ by $(\log\eps)^{2d}$. In particular, when $n$ is small, the kernels $G_n$ with homogeneous and analytic total absorption coefficient $\sigma_t$ admit a separability with $\cO(|\log \eps|^6)$ terms. Such low-rank structure has been computationally observed in a previous work~\cite{ReZhZh-arXiv19}.
\end{remark}

\subsubsection{The three-dimensional case}

We first show an asymptotic upper bound for the separability of $G_{n0}(\bx,\by)$ as $n\rightarrow \infty$.
\begin{theorem}
    Let $X$ and $Y$ two compact domains embedded in $\bbR^3$. Suppose that $X$ and $Y$ have disjoint projections onto the $xy$-plane. For any $\eps > 0$ and $\delta > 0$, there exist $N^{\eps} \le \cO(n^{3+\delta})$ and functions $f_l(\bx)\in L^{\infty}(X), g_l(\by) \in L^{\infty}(Y)$, $l=1,2,\dots, N^{\eps}$ such that 
    \begin{equation}
    \left\|G_{n0}(\bx, \by) - \sum_{l=1}^{N^{\eps}} f_l(\bx) g_l(\by)\right \|_{L^{\infty}(X\times Y)} \le \eps\,,
    \end{equation}
   for sufficiently large $n$, where the constant in the upper bound for $N^{\eps}$ depends on $X$ and $Y$.
\end{theorem}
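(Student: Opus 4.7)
The plan is to construct a piecewise-polynomial approximation of $G_{n0}(\bx,\by)$ in the $\by$ variable on a grid of scale $h\sim 1/n$, exploiting smoothness of $G_{n0}$ on $X\times Y$ (since $d_{0}:=\operatorname{dist}(X,Y)>0$) together with a sharp derivative bound $|\partial_\by^{j}Y_{n0}(\hat\br)|=\cO(n^{j}/|\bx-\by|^{j})$ that follows from the hypothesis on the $xy$-projections. Specifically, cover $Y$ by $M$ axis-aligned cubes $D_j$ of side $h$ with centers $\by_j$, and on each cube Taylor-expand in $\by$ about $\by_j$ to order $k$,
\[
G_{n0}(\bx,\by)=\sum_{|\beta|\le k}\frac{(\by-\by_j)^\beta}{\beta!}\,\partial_\by^\beta G_{n0}(\bx,\by_j)+R_{k,j}(\bx,\by).
\]
Each Taylor summand is manifestly separable, contributing $\binom{k+3}{3}=\cO(k^{3})$ pairs per cube, for a total of $M\cdot\cO(k^{3})$ separable pairs in $L^{\infty}(X)\times L^{\infty}(Y)$ after including the characteristic functions $\chi_{D_j}$.

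The heart of the argument is the remainder bound. Disjoint $xy$-projections of $X$ and $Y$ force the polar angle $\theta$ of $\hat\br$ to lie in a compact subinterval of $(0,\pi)$, so the Darboux asymptotic
\[
P_n(\cos\theta)=\sqrt{\tfrac{2}{\pi n\sin\theta}}\cos\!\Bigl((n+\tfrac12)\theta-\tfrac{\pi}{4}\Bigr)+\cO(n^{-3/2})
\]
holds uniformly on $X\times Y$, yielding $|\partial_\theta^{j}P_n(\cos\theta)|=\cO(n^{j-1/2})$ and hence, via the chain rule applied to $\theta=\arccos((x_3-y_3)/|\bx-\by|)$ (using $|\partial_\by^{j}\theta|=\cO(d_{0}^{-j})$ since $\sin\theta$ is bounded below), the bound $|\partial_\by^{j}Y_{n0}(\hat\br)|=\cO(n^{j}/d_{0}^{j})$. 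Combined with the boundedness on $X\times Y$ of the $\by$-derivatives of $E(\bx,\by)|\bx-\by|^{-2}$ (using $\sigma_t\in C^{k+1}(\overline\Omega)$), Leibniz gives $\|\partial_\by^{k+1}G_{n0}\|_{\infty}\le C_{0}^{k+3}\,n^{k+1}/d_{0}^{k+3}$. Plugging this into the Taylor remainder and choosing $h=c\,\eps^{1/(k+1)}/n$ with $c<d_{0}/(2C_{0})$ yields $|R_{k,j}|\le\eps$; with this $h$, the number of cubes is $M=\cO(|Y|\,n^{3}\eps^{-3/(k+1)})$, and the total separable rank is
\[
N^{\eps}\le M\cdot\binom{k+3}{3}=\cO\!\left(n^{3}\,\eps^{-3/(k+1)}\right).
\]
For any fixed $\delta>0$ and $\eps>0$, this is at most $\cO(n^{3+\delta})$ once $n$ is sufficiently large, the $\eps$-dependent constant being absorbed by $n^\delta$.

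The main technical obstacle is precisely the sharp angular-derivative bound $|\partial_\theta^{j}P_n(\cos\theta)|=\cO(n^{j-1/2})$. A naive use of the global pointwise estimate $|P_n^{(j)}(x)|\le\cO(n^{2j})$ valid on all of $[-1,1]$ would force a cube side $h\sim 1/n^{2}$ and produce only the far weaker rank bound $\cO(n^{6})$. The improvement comes from the uniform Darboux asymptotic, which requires exactly the hypothesis that $X$ and $Y$ have disjoint $xy$-projections, so that $\sin\theta$ stays uniformly bounded below on $X\times Y$; this is the same mechanism that made the lower-bound analysis of Lemma~\ref{LEM:3D1} go through under the same geometric hypothesis.
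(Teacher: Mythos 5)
Your proposal is correct and follows essentially the same route as the paper's proof: both hinge on the observation that disjoint $xy$-projections keep $\theta$ in a compact subinterval of $(0,\pi)$ so that the Darboux asymptotic yields $|\nabla_\by^j G_{n0}|=\cO(n^j)$, and both then approximate $G_{n0}$ locally in $\by$ by polynomials on a grid of scale $\sim 1/n$ (the paper uses linear interpolation of the snapshots $G_{n0}(\cdot,\by_m)$ on a grid of size $h=n^{-1-\delta/3}$, while you use an order-$k$ Taylor expansion on cubes of size $h\sim\eps^{1/(k+1)}/n$). The only substantive difference is cosmetic in this context: your bookkeeping makes the $\eps$-dependence explicit as $\cO(n^{3}\eps^{-3/(k+1)})$ rather than absorbing it into the $n^{\delta}$ factor for large $n$.
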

\begin{proof}
    Without loss of generality, we assume $Y$ is contained in a unit cube. Let $\by_m$, $m=1,2,\dots, N^h = n^{3(1+\delta/3)}$ be the grid points of a uniform Cartesian grid in $Y$ with a grid size $h = n^{-1 - \delta/3}$. We denote the linear subspace $S_X = \text{span}\{ G_{n0}(\bx, \by_m)  \}_{m=1}^{N^h}\subset L^{\infty}(X)$. Then we only have to show that 
    \begin{equation}
    \left\| G_{n0}(\bx, \by) - P_{S_X} G_{n0}(\bx, \by)  \right\|_{L^{\infty}(X)} \le \eps\,,
    \end{equation}
    $P_{S_X}$ being the projection onto $S_X$, for sufficiently large $n$. Since $X$ and $Y$ are disjoint and their projections onto the $xy$-plane are disjoint as well, the polar angle $\theta$ of $\bx - \by$ is away from $0$ and $\pi$. Hence there exists a constant $c'$ that $\theta \in [c', \pi - c']$ for any $\bx\in X$ and $\by\in Y$. From the asymptotic expansion~\eqref{EQ:LEGEND},
    \begin{equation}
    \begin{aligned}
    \nabla_{\by} G_{n0}(\bx, \by) &= \sqrt{\frac{\hat{n}}{2\pi}} \left( P_n(\cos \theta)\nabla_{\by} \frac{E(\bx, \by)}{|\bx - \by|^2} + \frac{E(\bx, \by)}{|\bx - \by|^2} \nabla_{\by} P_n(\cos\theta) \right) \\ 
    &\le \cO(1) + \cO(n) = \cO(n)
    \end{aligned}
    \end{equation}
    and $|\nabla^2_{\by} G_{n0}(\bx, \by)| \le \cO(n^2)$, where the bounds are uniform in $\bx, \by$, and the constants depend on $c'$ and the distance between $\bx, \by$. Then follow the proof of Theorem 3.2 in~\cite{EnZh-CPAM18}, given any non-grid point $\by \in Y$, $G_{n0}(\bx, \by)$ can be approximated by a linear interpolation of $G_{n0}(\bx, \by_m)$ at neighboring grid points. Suppose $\by_{m_1},\dots, \by_{m_{d+1}}$ form the $d$-simplex containing $\by$, then the barycentric coordinates $\lambda_{j} \ge 0$ satisfies
    \begin{equation}
    \by = \sum_{j=1}^{d+1} \lambda_j \by_{m_j},\quad \sum_{j=1}^{d+1} \lambda_j = 1\,.
    \end{equation} 
    Therefore
    \begin{equation}
    \left| G_{n0}(\bx, \by) - \sum_{j=1}^{d+1} \lambda_j G_{n0}(\bx, \by_{m_j})\right| \le \cO\left( h^2 \sup_{\by\in Y}\|\nabla^2_{\by} G_{n0}(\bx, \by)\|\right) = \cO(n^2 h^2) 
    = \cO(n^{-2\delta/3})\,.
    \end{equation}
    This completes the proof.
\end{proof}

\begin{remark}
When the projections of $X$ and $Y$ onto $xy$-plane are overlapped, $|\nabla_{\by} G_{n0}(\bx, \by)|$ will be bounded by $\cO(n^{3/2})$ instead. This means that the upper bound could be larger than $\cO(n^{3+\delta})$. In fact, we can construct a $\cO(n^{4+\delta})$-term separable approximation for the general case, see Theorem~\ref{THM:3D3}.
\end{remark}

To prove the main result of this subsection, we need the following lemma.
\begin{lemma}\label{LEM:LEG}
	The Legendre polynomial $P_n(z)$ for $|z|\le 1$ has the following bound,
	\begin{equation}
	\sup_{|z| = 1}|P_n(z)| = |P_n(\pm i) |
	\end{equation}
	and $\sup_{|z| = 1}|P_n(z)|\le 3^n$.
\end{lemma}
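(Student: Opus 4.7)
The plan is to handle the two claims separately: the identity $\sup_{|z|=1}|P_n(z)|=|P_n(\pm i)|$ follows from the explicit polynomial expansion of $P_n$ combined with a phase-alignment argument, while the $3^n$ bound follows directly from Laplace's integral representation.

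For the first claim, I would expand $P_n$ via Rodrigues' formula $P_n(z)=\frac{1}{2^n n!}(d/dz)^n(z^2-1)^n$, which gives
\[
P_n(z)=\sum_{k=0}^{\lfloor n/2\rfloor}(-1)^k a_k\,z^{n-2k},\qquad a_k:=\frac{(2n-2k)!}{2^n\,k!\,(n-k)!\,(n-2k)!}>0.
\]
Since $|z|^{n-2k}=1$ on the unit circle, the triangle inequality immediately yields $|P_n(z)|\le\sum_k a_k$. At $z=i$ the key observation is $i^{n-2k}=i^n(-1)^k$, so the alternating factor $(-1)^k$ from the Rodrigues expansion is exactly cancelled and every term picks up the common phase $i^n$:
\[
P_n(i)=i^n\sum_{k=0}^{\lfloor n/2\rfloor}a_k.
\]
Hence $|P_n(i)|=\sum_k a_k$, which attains the triangle-inequality bound; the identical computation at $z=-i$ gives $|P_n(-i)|=\sum_k a_k$ as well, proving the first equality.

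For the $3^n$ bound, I would invoke Laplace's first integral representation,
\[
P_n(z)=\frac{1}{\pi}\int_0^{\pi}\bigl(z+\sqrt{z^2-1}\cos t\bigr)^n\,dt,
\]
valid for all $z\in\bbC$ with an appropriate branch of the square root. Passing the modulus inside, $|P_n(z)|\le(|z|+|\sqrt{z^2-1}|)^n$, and on the unit circle $|z^2-1|\le|z|^2+1=2$, so $|\sqrt{z^2-1}|\le\sqrt 2$, which yields $|P_n(z)|\le(1+\sqrt 2)^n<3^n$.

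The only mildly subtle point is the phase-alignment step in the first part: one must recognize that the alternating signs in the coefficients of $P_n$ and the phases $i^{-2k}=(-1)^k$ conspire to line up precisely at $z=\pm i$ rather than at some other point on the unit circle. Given the explicit expansion this is immediate, but it is essentially the only content of the argument beyond invoking standard representations. The stated bound $3^n$ is not tight — the same computation in fact yields the sharper $(1+\sqrt 2)^n$ — but $3^n$ suffices for what follows.
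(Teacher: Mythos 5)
Your proof is correct and follows essentially the same route as the paper: the explicit expansion of $P_n$ with the phase alignment $i^{n-2k}(-1)^k = i^n$ for the first claim, and Laplace's integral representation (which the paper derives from Schl\"afli's formula) giving the bound $(1+\sqrt{2})^n < 3^n$ for the second. The only cosmetic difference is that you bound $|P_n(z)|$ directly for all $|z|=1$ rather than only at $z=\pm i$, which changes nothing of substance.
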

\begin{proof}
	First, we can write the Legendre polynomial in the explicit form:
	\begin{equation}
	P_n(z) = \frac{1}{2^n}\sum_{k=0}^{[\frac{n}{2}]} (-1)^k \binom{n}{k}\binom{2n-2k}{n} z^{n-2k}\,.
	\end{equation}
	We then use the fact $i^{n-2k}(-1)^k \equiv i^n$ to obtain the following bound:
	\begin{equation}
	\sup_{|z|\le 1} |P_n(z) | \le \frac{1}{2^n}\sum_{k=0}^{[\frac{n}{2}]} \binom{n}{k}\binom{2n-2k}{n} = |P_n(\pm i)|\,.
	\end{equation}
	The estimate is based on the Schl\"{a}fli's integral representation~\cite{Schlafli-GMA56},
	\begin{equation}
	P_n(z) = \frac{1}{2\pi i} \oint_{C} \frac{(w^2 - 1)^n}{2^n (w-z)^{n+1}} dw
	\end{equation}
	with $C$ being any simple counter-clockwise loop around $z$. By taking the loop as a circle centered at $z$ with radius $\sqrt{|z^2 - 1|}$~\cite{ByFu-Book12}, we have
	\begin{equation}
	P_n(z) = \frac{1}{\pi} \int_0^{\pi} (z + \sqrt{z^2 - 1}\cos t)^n dt\,,
	\end{equation}
	when $z = \pm i$, $|z + \sqrt{z^2 - 1}\cos t| \le |1 + \sqrt{2}\cos t| < 3$, therefore $
	|P_n(\pm i)| \le 3^n$.
\end{proof}

Here is the main result on the upper bound in the three-dimensional case.
\begin{theorem}\label{THM:3D3}
	Let $X, Y$ be two disjoint compact sets in $\Omega \subset \bbR^3$ and $\bx_c \in X$ and $\by_c \in Y$ be their centers respectively.  The distance between the two centers is $|\bx_c - \by_c| =  \rho =\cO(1)$ and $\sup_{\bx\in X}|\bx_c - \bx|= \zeta$, $\sup_{\by\in Y}|\by_c - \by|= \eta$. Assume $n\ge 1$ and $\frac{(\zeta + \eta)}{\rho} < \frac{1}{2}$, then for any $\eps >0$, there exists $N^{\eps} \le \cO((n+\log\eps)^4\eps^{-6/(k+1)})$ and functions $f_l\in L^{\infty}(X), g_l\in L^{\infty}(Y)$, $l=1,2,\dots,N^{\eps}$ such that
	\begin{equation}
	\Big|\frac{E(\bx, \by)}{|\bx - \by|^2} Y_{n0}\left(\frac{\bx - \by}{|\bx - \by|}\right) - \sum_{l=1}^{N^{\eps}} f_l(\bx) g_l(\by)\Big| \le \eps\,,
	\end{equation}
	where the constant in $N^{\eps}$ depends on $X$ and $Y$.
\end{theorem}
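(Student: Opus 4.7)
The plan is to mirror the two-dimensional Theorem~\ref{THM:LOWRANK} by writing
\[
\frac{E(\bx,\by)}{|\bx-\by|^2}\,Y_{n0}\!\left(\frac{\bx-\by}{|\bx-\by|}\right) = h(\bx,\by)\cdot Z_n(\bx,\by),\qquad h := \frac{E(\bx,\by)}{|\bx-\by|^2},\quad Z_n := Y_{n0}(\widehat{\bx-\by}),
\]
separating the smooth factor $h$ and the oscillatory angular factor $Z_n$ independently, and then taking the tensor product of the two approximations.

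The factor $h$ is handled exactly as in Step~1 of the proof of Theorem~\ref{THM:LOWRANK}: since $\sigma_t\in C^{k+1}$ and $X,Y$ are compact and disjoint, $h\in C^{k+1}(X\times Y)$. Cover $X$ and $Y$ by disjoint cubes of side $\ell=\cO(\eps^{1/(k+1)})$ and on each pair of cubes apply a two-sided Taylor expansion of $h$ about the cube centers to order $k$; the remainder is $\cO(\ell^{k+1})=\cO(\eps)$, and since $X,Y\subset\bbR^3$ the total number of piecewise-polynomial separable terms is $\cO(\ell^{-6})=\cO(\eps^{-6/(k+1)})$.

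For $Z_n$, I use the solid-harmonic identity $Z_n(\bx,\by)=R_{n0}(\bx-\by)/|\bx-\by|^n$, where $R_{n0}(\bu)=|\bu|^nY_{n0}(\hat\bu)$ is a homogeneous harmonic polynomial of degree $n$ on $\bbR^3$. The numerator splits via the translation formula for regular solid harmonics, $R_{n0}(\bx-\by)=\sum_{\ell=0}^n\sum_{|p|\le\ell}c_{\ell p}^{n0}\,R_{\ell p}(\bx)\,R_{n-\ell,-p}(-\by)$, into $\cO(n^2)$ separable terms. For the denominator, I shift the origin to $\bx_c$ so that $|\bx|/|\by|\le\zeta/(\rho-\eta)<1/2$ and use the Gegenbauer generating function $|\bx-\by|^{-n}=|\by|^{-n}\sum_{s\ge 0}C_s^{n/2}(\hat\bx\cdot\hat\by)(|\bx|/|\by|)^s$. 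Using the estimate $|C_s^{n/2}(\cos\gamma)|\le C_s^{n/2}(1)=\binom{s+n-1}{s}$ (analogous in spirit to Lemma~\ref{LEM:LEG}) and, exactly as in the 2D proof, noting that the ratio $(s+n)/(2(s+1))$ of successive terms drops below $3/4$ for $s>2n$, truncation at $s\le N=2n+\cO(|\log\eps|)$ yields error below $\eps$. Each $C_s^{n/2}(\hat\bx\cdot\hat\by)$ is a polynomial of degree $s$ in $\hat\bx\cdot\hat\by$ and expands in Legendre polynomials $P_\ell$, $\ell\le s$, each of which separates via the addition theorem $P_\ell(\hat\bx\cdot\hat\by)=\tfrac{4\pi}{2\ell+1}\sum_{|m|\le\ell}Y_{\ell m}(\hat\bx)\,Y_{\ell m}^{*}(\hat\by)$ into $2\ell+1$ pairs.

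The principal difficulty is the final combinatorial accounting of the tensor product of the numerator and denominator expansions. A naive multiplication of the $\cO(n^2)$ numerator terms by the $\cO\bigl(\sum_{s\le N}(s+1)^2\bigr)=\cO((n+|\log\eps|)^3)$ denominator terms yields too many. The saving comes from the Clebsch--Gordan (equivalently, $3j$-symbol; cf.~\eqref{EQ:3J}) product rule $Y_{\ell p}(\hat\bx)\,Y_{\ell' m}(\hat\bx)=\sum_L c\,Y_{L,p+m}(\hat\bx)$, which collapses every $\bx$-factor of the combined expansion into the form $|\bx|^t Y_{LM}(\hat\bx)$ with $0\le t,L\le n+N$ and $|M|\le L$, and analogously for $\by$. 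Enumerating the admissible index combinations under the resulting selection constraints bounds the number of surviving separable pairs by $\cO((n+|\log\eps|)^4)$. Tensoring with the $\cO(\eps^{-6/(k+1)})$-term approximation of $h$ and summing the two $\cO(\eps)$ errors yields $N^\eps\le\cO((n+|\log\eps|)^4\,\eps^{-6/(k+1)})$, as claimed.
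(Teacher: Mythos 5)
Your proposal is correct in outline and reaches the stated bound, but it takes a genuinely different route from the paper for the angular factor. The paper writes $Y_{n0}(\widehat{\bx-\by})$ as $\sqrt{\tfrac{2n+1}{4\pi}}P_n\bigl(\tfrac{|\bx|\cos\theta_1-|\by|\cos\theta_2}{|\bx-\by|}\bigr)$, expands $P_n$ into monomials via its coefficients $c_{nk}^{1/2}$, applies a \emph{separate} Gegenbauer expansion to each power $|\bx-\by|^{-k}$, $k=0,\dots,n$, and controls the accumulation of the $n+1$ truncation errors with the coefficient bound $\sum_k|c_{nk}^{1/2}|\le 3^n$ (Lemma~\ref{LEM:LEG}); the final count comes from enumerating the monomials $|\bx|^{m}\cos^a\theta_1\sin^b\theta_1\cos^c\phi_1\sin^d\phi_1$ under the constraints $a+b\le m\le 2N_n$, $c+d=b$, giving $\cO(N_n^4)$. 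Your solid-harmonic factorization $Y_{n0}(\widehat{\bx-\by})=R_{n0}(\bx-\by)/|\bx-\by|^n$ requires only \emph{one} Gegenbauer expansion and dispenses with Lemma~\ref{LEM:LEG} entirely, replacing it with the translation formula for regular solid harmonics and Clebsch--Gordan recombination; this is cleaner and more structured. Two points need tightening. First, the truncation error of the single Gegenbauer series gets multiplied by $|R_{n0}(\bx-\by)|\sim|\bx-\by|^n\sqrt{n}$, and the Gegenbauer tail itself carries a prefactor $C_{2n}^{n/2}(1)2^{-2n}$ that is exponential in $n$ (this is exactly what the paper's $3^n$ bound absorbs); so your truncation length should read $N=\cO(n)+\cO(|\log\eps|)$ rather than literally $2n+\cO(|\log\eps|)$ --- same asymptotic order, but the constant in front of $n$ is not $2$. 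Second, the ``principal difficulty'' you flag --- the final combinatorial count --- is asserted rather than carried out, but it does close: after the Clebsch--Gordan collapse every $\bx$-factor lies in the span of $\{|\bx|^{t}Y_{LM}(\hat{\bx}) : 0\le t\le n+N,\ 0\le L\le n+N,\ |M|\le L\}$, and since the separability rank is bounded by the number of \emph{distinct} $\bx$-functions (group the terms by their $\bx$-factor and absorb everything else into the $g_l$'s), you in fact get $\cO((n+N)^3)=\cO((n+|\log\eps|)^3)$ for the angular factor --- slightly better than the paper's $\cO((n+|\log\eps|)^4)$ and comfortably within the claimed bound after tensoring with the $\cO(\eps^{-6/(k+1)})$-term piecewise-polynomial approximation of the smooth factor.
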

\begin{proof}
	From the first part of the proof in Theorem~\ref{THM:LOWRANK}, we can conclude that the separability of $\frac{E(\bx, \by)}{|\bx - \by|^2}$ is at most $\cO(\eps^{-2d/(k+1)})$ with $d = 3$. Now we only discuss the separability of the function $Y_{n0}\left(\frac{\bx - \by}{|\bx - \by|}\right)$. Again due to the fact that separability does not depend on the choice of the origin, we can select the origin at $\bx_c$ to have $\frac{|\bx|}{|\by|} < \frac{1}{2}$. Using the spherical coordinate $\bx = (|\bx|, \theta_1, \phi_1)$ and $\by = (|\by|, \theta_2, \phi_2)$, where $\theta_i$ is the polar angle and $\phi_i$ the azimuth angle respectively, we have
	\begin{equation}\label{EQ:YNO EXP}
	\begin{aligned}
Y_{n0}\left(\frac{\bx - \by}{|\bx - \by|}\right) &= \sqrt{\frac{2n+1}{4\pi}} P_n\left(\frac{|\bx| \cos\theta_1 - |\by|\cos\theta_2}{|\bx - \by|}\right) \\
&= \sqrt{\frac{2n+1}{4\pi}} \sum_{k=0}^n c_{nk}^{1/2} \left(|\bx| \cos\theta_1 - |\by|\cos\theta_2\right)^k |\bx - \by|^{-k} \\
&= \sqrt{\frac{2n+1}{4\pi}} \sum_{k = 0}^n \sum_{j=0}^k c_{nk}^{1/2}\binom{k}{j}(-1)^j |\bx|^j |\by|^{k-j} \cos^j\theta_1\cos^{k-j}\theta_2 \cdot |\bx - \by|^{-k}\,,
	\end{aligned}
	\end{equation}
	where $c_{nk}^{1/2}$ is the coefficient of $x^k$ of the Legendre polynomial $C_n^{1/2}(x)$. On the other hand, we can expand $|\bx - \by|^{-k}$ with the generating function of the Gegenbauer polynomials to have
	\begin{equation}\label{EQ:SERIES}
	\frac{1}{|\bx - \by|^k} = \sum_{s = 0}^{\infty} C_s^{k/2}(\Delta) \frac{|\bx|^s}{|\by|^{s + k}}\quad \text{with }\Delta = \cos\theta_1 \cos\theta_2 + \sin\theta_1\sin\theta_2 \cos(\phi_1 - \phi_2)
	\end{equation}
 with $C_s^{k/2}$ the same as in the 2D case. Following the same argument in Theorem~\ref{THM:LOWRANK}, for each $|\bx - \by|^{-k}$, the truncated series from~\eqref{EQ:SERIES} with  $$N_k = 2k +\cO\left(\log\left(\frac{\eps}{\sqrt{n}3^n |\bx - \by|^k}\right)\right)\le \cO(n)+\cO(\log n) + \cO(|\log\eps|)$$ terms has an approximation error less than $\sqrt{\dfrac{4\pi}{2n+1}}\dfrac{\eps}{3^n |\bx  - \by|^k}$, where the constants depend on the distance between $X$ and $Y$. Therefore we can put the above truncated series into~\eqref{EQ:YNO EXP}, leading to an approximation error at most 
	\begin{equation}
	\begin{aligned}
	&\quad \Big|Y_{n0}\left(\frac{\bx - \by}{|\bx - \by|}\right) - \sqrt{\frac{2n+1}{4\pi}} \sum_{k = 0}^n \sum_{j=0}^k \sum_{s=0}^{N_k} c_{nk}^{1/2}\binom{k}{j}(-1)^j C_s^{k/2}(\Delta) |\bx|^{j+s} |\by|^{-(j+s)} \cos^j\theta_1\cos^{k-j}\theta_2    \Big| \\&\le
	\sqrt{\frac{2n+1}{4\pi}} \sum_{k=0}^n \Big| c_{nk}^{1/2} \left(|\bx| \cos\theta_1 - |\by|\cos\theta_2\right)^k \Big|\sqrt{\frac{4\pi}{2n+1}} \frac{\eps}{3^n |\bx - \by|^k} \\&
	= \frac{\eps}{3^n}\sum_{k=0}^n \Big| c_{nk}^{1/2} \left(\frac{|\bx| \cos\theta_1 - |\by|\cos\theta_2}{|\bx - \by|}\right)^k \Big| \le \frac{\eps}{3^n}\sum_{k=0}^n \big| c_{nk}^{1/2}\big| \le \eps\,.
	\end{aligned}
	\end{equation}
	The last inequality follows from Lemma~\ref{LEM:LEG}. We now continue to expand $C_s^{k/2}(\Delta)$ into 
	\begin{equation}
    \begin{aligned}
	C_s^{k/2}(\Delta) &= \sum_{t=0}^s c_{st}^{k/2} (\cos\theta_1 \cos\theta_2 + \sin\theta_1\sin\theta_2 \cos(\phi_1 - \phi_2))^t \\
    &= \sum_{t=0}^s c_{st}^{k/2} \sum_{l_1 + l_2 \le t} (\cos\theta_1 \cos\theta_2 )^{l_1} (\sin\theta_1\sin\theta_2)^{t-l_1} (\cos\phi_1\cos\phi_2)^{l_2} (\sin\phi_1\sin\phi_2)^{t-l_1 - l_2}\,.
    \end{aligned}
	\end{equation}
	Therefore the truncated expansion can be written in the following form:
	\begin{equation}
\sqrt{\frac{2n+1}{4\pi}} \sum_{k = 0}^n \sum_{j=0}^k \sum_{s=0}^{N_k}\sum_{t=0}^s \sum_{l_1 +l_2\le t} c_{nk}^{1/2}c_{st}^{k/2} \binom{k}{j}(-1)^j p_{k,j,s,t,l_1, l_2}(|\bx|, \theta_1, \phi_1) q_{k,j,s,t,l_1, l_2}(|\by|, \theta_2, \phi_2) 
	\end{equation}
	where $p_{k,j,s,t,l_1, l_2}$ and $q_{k,j,s,t,l_1,l_2}$ are given as:
	\begin{equation}
	\begin{aligned}
	p_{k,j,s,t,l_1, l_2} &= |\bx|^{j+s}  \cos^{j+l_1}\theta_1  \sin^{t-l_1} \theta_1 \cos^{l_2}\phi_1\sin^{t-l_1 - l_2}\phi_1\,, \\
	q_{k,j,s,t,l_1, l_2} &= |\by|^{-(j+s)} \cos^{k-j +l_1}\theta_2  	\sin^{t-l_1} \theta_2 \cos^{l_2}\phi_2\sin^{t-l_1 - l_2}\phi_2 \,.
	\end{aligned}
	\end{equation}
	The theorem is then proved with the observation that the set $\{p_{k,j,s,t,l_1, l_2}\}$ contains only $\cO(N_n^4)$ different functions of the form
	\begin{equation}
	|\bx|^{m} \cos^a \theta_1 \sin^b \theta_1 \cos^c \phi_1 \sin^d \phi_1 
	\end{equation}
	subject to the constraints
	\begin{equation}
	0 \le m \le 2N_n,\quad a + b \le m ,\quad c +d = b.
	\end{equation}
\end{proof}

\section{Concluding remarks}
\label{SEC:CON}

We studied in this work an integral formulation of the radiative transfer equation (RTE) with a generic anisotropic scattering phase function $p(\bv, \bv')$ that depends only on the product $\bv\cdot\bv'$. Unlike in the case of isotropic scattering where we can derive a single integral equation for the zeroth moment of RTE solution that is completely decoupled from its higher order moments, the integral formulation in the anisotropic case involves a system of integral equations that couples all angular moments of the RTE solution. We studied approximate separability, that is, separable approximation with certain accuracy tolerance, of the integral kernels of this coupled system of integral equations. More precisely, we developed asymptotic lower and upper bounds on the separability in both two- and three-dimensional physical space; see Theorem~\ref{thm:sep} and Theorem~\ref{THM:LOWRANK} respectively for lower and upper bounds in the two-dimensional case, and Theorem~\ref{THM:3D2} and Theorem~\ref{THM:3D3} respectively for lower and upper bounds in the three-dimensional case. A general observation is that, the separability indicator, that is the number of terms needed for the separable approximation, grows very fast (often at some power rate) with respect to the frequency in the angular space, but much slower with respect to the accuracy tolerance.

Integral formulations play important roles in developing fast algorithms for partial differential equations. Separability properties of the integral kernels decides whether or not (hierarchical) low-rank approximations exist for the integral operator. Low-rank approximations are often the foundation of fast computational algorithms for integral equations. In our case, the growth rates of the approximate separability of the integral kernels for the anisotropic radiative transfer equation provide some insight on the computational cost of the integral formulation of the RTE: when the scattering phase function $p(\bv\cdot \bv')$ is very anisotropic, we need a large number of terms in the approximate scattering phase function $p_M$ to have an accurate approximation. However, the corresponding integral kernels for large $M$ are very ``un-separable''. Therefore, the matrices corresponding to those kernels can not be compressed much, and thus require high computational cost to be multiplied to a given vector. This results in a high overall computational cost for a problem with large $M$. On the other hand, if the scattering phase function can be approximated accurately with only a small number of terms in $p_M$, for instance in the case of isotropic scattering as discussed in~\cite{ReZhZh-arXiv19}, one can have quite efficient compression for the integral kernels involved. Fast computational algorithms can be developed in this case.

\section*{Acknowledgment}
This work is partially supported by the National Science Foundation through grants DMS-1620473, DMS-1821010 and DMS-1913309.

\bibliographystyle{siam}
\bibliography{Separability-Bib}

\end{document}